 \numberwithin{equation}{section}
\theoremstyle{plain}
\newtheorem{thm}{Theorem}[section]
\newtheorem{lem}[thm]{Lemma}
\newtheorem{pro}[thm]{Proposition}
\newtheorem{de}[thm]{Definition}
\newtheorem{rem}[thm]{Remark}
\newcommand{\ber}{\nu_{\lambda}}
\def\R {{\Bbb R}}
\def\N {{\Bbb N}}
\def\Z {{\Bbb Z}}
\def\D {{\mathcal D}}
\begin{document}
\baselineskip 16pt

\title[Multifractal analysis of Bernoulli convolutions]{Multifractal analysis of  Bernoulli convolutions associated with Salem numbers}

\date{}

\author{ De-Jun FENG}
\address{
Department of Mathematics,
The Chinese University of Hong Kong,
Shatin,  Hong Kong}

\email{djfeng@math.cuhk.edu.hk}

\keywords{
 Bernoulli convolutions; Salem numbers;  Self-similar measures; Self-conformal measures; Hausdorff dimension; Multifractal formalism}

 \thanks {
2000 {\it Mathematics Subject Classification}: Primary 28A78, Secondary  28A80, 11K16}

\maketitle

\begin{abstract}
We consider the multifractal structure of the  Bernoulli convolution $\nu_{\lambda}$, where $\lambda^{-1}$ is a Salem number in $(1,2)$.
Let $\tau(q)$ denote the $L^q$ spectrum of $\nu_\lambda$. We show that if $\alpha \in [\tau'(+\infty), \tau'(0+)]$, then
the level set  $$E(\alpha):=\left\{x\in \R:\; \lim_{r\to 0}\frac{\log \nu_\lambda([x-r, x+r])}{\log r}=\alpha\right\}$$
is non-empty and $\dim_HE(\alpha)=\tau^*(\alpha)$, where $\tau^*$ denotes the Legendre transform of $\tau$.
This result extends to  all self-conformal measures  satisfying the asymptotically weak separation condition.
We point out that the interval $[\tau'(+\infty), \tau'(0+)]$
is  not  a singleton when $\lambda^{-1}$ is the largest real root of the polynomial $x^{n}-x^{n-1}-\cdots -x+1$,  $n\geq 4$. An example is constructed to show that
absolutely continuous self-similar measures may also have rich multifractal structures.
\end{abstract}
\section{Introduction}
\label{S-1}

For any $\lambda\in (0,1)$, let $\ber$ denote the distribution of
$\sum_{n=0}^\infty \epsilon_n \lambda^n$
where the coefficients $\epsilon_n$ are either $-1$ or $1$,
chosen independently with probability $\frac{1}{2}$ for each.
It is the infinite convolution product of the distributions
$\frac{1}{2}(\delta_{-\lambda^n}+\delta_{\lambda^n})$, giving rise to the term
``infinite
Bernoulli convolution'' or simply ``Bernoulli convolution''.  The Bernoulli
convolution
can be expressed as a self-similar measure $\ber$ satisfying the equation
\begin{equation} \label{1.1}
    \ber = \frac{1}{2}\ber\circ S^{-1}_1 +\frac{1}{2}\ber\circ S^{-1}_2,
\end{equation}
where $S_1(x) = \lambda x-1$ and $S_2(x)=\lambda x+ 1$.
These measures have been studied since the 1930's, revealing  surprising
connections with a number of areas in mathematics, such as harmonic
analysis, fractal geometry, number theory, dynamical systems, and others, see \cite{PSS00}.

The fundamental question about $\ber$  is to decide for which  $\lambda\in (\frac12,1)$ this measure is absolutely continuous and for which $\lambda$ it is singular.  It is well known that
for each $\lambda\in (1/2, 1)$,  $\ber$ is continuous, and  it is either purely absolutely continuous or purely
singular.  Solomyak \cite{Sol95} proved that $\ber$ is absolutely continuous for a.e. $\lambda\in (1/2, 1)$.
In the other direction,   Erd\"os
\cite{Erd39} proved that
if $\lambda^{-1}$ is a Pisot number, i.e. an algebraic integer whose
algebraic conjugates
are all inside the unit disk, then $\ber$ is singular. It is an open problem whether the Pisot reciprocals are the only
class of $\lambda$'s in $(\frac{1}{2}, 1)$ for which $\ber$ is singular.
This question is far from being answered.
There appears to be a general belief that the
best candidates for counter-examples are
the reciprocals of {\em Salem numbers}. Recall that a positive number $\beta$ is called a Salem number
if it
is an algebraic integer whose algebraic conjugates all have modulus no
greater than 1,
with at least one of which on the unit circle. Indeed, as Kahane observed, when $\lambda^{-1}$ is a Salem number,
the Fourier transform of $\ber$ has no uniform decay at infinity (cf. \cite[Lemma 5.2]{PSS00}).  A well-known class of Salem numbers are the largest real roots $\beta_n$ of the polynomials
$x^n-x^{n-1}-\cdots-x+1$; where $n\geq 4$.   It was  shown by   Wang and  the author in \cite{FeWa04} that for any $\epsilon>0$, the density of $\nu_{1/\beta_n}$, if it exists, is not in
$L^{3+\epsilon}(\R)$ when $n$ is large enough.

In this paper, we study the local dimensions and the multifractal structure of $\nu_\lambda$ when $\lambda^{-1}$ is a Salem number in $(1,2)$. Few results along this direction have been known in the literature.  Before formulating our results, we first recall some basic notation used in the multifractal analysis. The reader is referred to \cite{Fal-book} for details.

Let $\mu$ be a finite  Borel measure in $\R^d$ with compact support. For $x\in \R^d$ and $r>0$, let $B_r(x)$ denote the closed ball centered at $x$ of radius $r$.  For $q\in \R$,  the {\it $L^q$ spectrum} of $\mu$ is defined as
$$
\tau_\mu(q)=\liminf_{r\rightarrow 0}\frac{\log \Theta_\mu(q;r)}{\log r},
$$
where
\begin{equation}
\label{e-tz1}
\Theta_\mu(q;r)= \sup
\sum_{i}\mu (B_r(x_i))^q ,\qquad r>0,\; q\in \R,
\end{equation}
and the supremum is taken over all  families of disjoint balls $%
\{B_r(x_i)\}_{i}$  with $x_{i}\in \mbox{supp}(\mu)$.
It is easily checked that ${\tau}_\mu(q)$ is a concave
function of $q$ over $\R$. For $x\in \R^d$, the {\it local dimension} of $\mu$ at $x$ is defined as
$$
d_\mu(x)=\lim_{r\to 0}\frac{\log \mu(B_r(x))}{\log r},
$$
provided that the limit exists.
For $\alpha\in \R$, denote
$$
E_\mu(\alpha)=\left\{x\in \R:\; d_\mu(x)=\alpha\right\},
$$
which is called the {\it level set } of $\mu$.

One of the main objectives of multifractal analysis is to study the {\it dimension spectrum} $\dim_HE_\mu(\alpha)$ and its relation with the $L^q$ spectrum  $\tau_\mu(q)$, here
$\dim_H$ denotes the Hausdorff dimension. The celebrated heuristic principle known as the {\it multifractal
formalism} which was first introduced by some
physicists \cite{H86}, states that for ``good'' measures $\mu$, the dimension spectrum $\dim_HE_\mu(\alpha )$ can be recovered by the  $L^{q}$-spectrum  $\tau_\mu(q)$ through the Legendre transform:
\begin{equation}
\label{e-1.5}
\dim_HE_\mu(\alpha )=\tau^*_\mu(\alpha):=\inf \{\alpha q-\tau_\mu(q):\ q\in \R\}.
\end{equation}
For  more backgrounds  of the multifractal formalism, we refer to the books \cite{Fal-book,Pes97}.
The multifractal  formalism  has been verified to hold for many natural measures including for example,
self-similar measures  satisfying the well-known {\it open set condition} \cite{CaMa92, Ols95, Pat97}. In the recent decade, there have been a lot of interest in studying the validity of the multifractal  formalism for self-similar measures with {\it overlaps} (see, e.g., \cite{FeLa09} and the references therein).

The main  result of the paper is the following.
\begin{thm}
\label{thm-1.1} Let $\lambda\in (1/2,1)$ so that $\lambda^{-1}$ is  a Salem number. Then
\begin{itemize}
\item[(i)]
$E_{\ber}(\alpha)\neq \emptyset$ if $\alpha\in [\tau'_{\nu_\lambda}(+\infty), \tau'_{\ber}(0+)]$, where
$
\tau'_{\nu_\lambda}(+\infty):=\lim_{q\to +\infty}\tau_{\ber}(q)/{q},
$ and $\tau'_{\ber}(0+)$ denotes the right derivative of $\tau_{\ber}$ at $0$.
\item[(ii)]
For any $\alpha\in [\tau'_{\nu_\lambda}(+\infty),\tau'_{\ber}(0+)]$,
 \begin{equation}
 \label{e-k}
 \dim_HE_{\ber}(\alpha)=\tau^*_{\ber}(\alpha):=\inf\{\alpha q-\tau_{\ber}(q):\; q\in \R\}.
 \end{equation}
 \end{itemize}
\end{thm}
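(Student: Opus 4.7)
My plan is to deduce Theorem~\ref{thm-1.1} from the general result promised in the abstract: for any self-conformal measure satisfying the asymptotically weak separation condition (AWSC), the multifractal formalism \eqref{e-1.5} holds throughout the interval $[\tau'_\mu(+\infty),\tau'_\mu(0+)]$. The first task is therefore to verify AWSC for $\nu_\lambda$ when $\lambda^{-1}$ is a Salem number. Two $n$-th level cylinders of $\nu_\lambda$ overlap significantly precisely when a polynomial sum $\sum_{k=0}^{n-1}\epsilon_k\lambda^k$ with $\epsilon_k\in\{-1,0,1\}$ is small (of order $\lambda^n$), so AWSC reduces to a subexponential bound on the number of such distinct near-zero sums. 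This I would prove by embedding $\Z[\lambda^{-1}]$ diagonally into $\R^d$ via the Galois conjugates of $\lambda^{-1}$; since all conjugates lie in the closed unit disk, each near-collision $\sum\epsilon_k\lambda^k\approx 0$ corresponds to a lattice point of $\Z[\lambda^{-1}]$ whose conjugate coordinates are uniformly bounded, reducing the count to a lattice-point problem in a fixed bounded region and yielding polynomial growth in $n$.

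Assuming AWSC, I would prove the formalism by the standard two-sided argument adapted to overlaps. The upper bound $\dim_H E_{\nu_\lambda}(\alpha)\le \tau^*_{\nu_\lambda}(\alpha)$ follows from a covering argument: $E_{\nu_\lambda}(\alpha)$ can be covered at scale $r$ by roughly $\Theta_{\nu_\lambda}(q;r)\,r^{-q\alpha}$ balls for any $q$, since each ball carries mass $\asymp r^\alpha$, and taking the infimum in $q$ produces $\tau^*_{\nu_\lambda}(\alpha)$. For the lower bound (and non-emptiness), I would fix $q$ with $\tau'_{\nu_\lambda}(q)=\alpha$ and construct a Gibbs-like measure $\mu_q$ on the symbolic space $\{1,2\}^\N$ by partitioning the $n$-th level cylinders into AWSC-equivalence classes (cylinders whose contractions differ by at most $\lambda^n$) and assigning each class mass proportional to its $q$-th $\nu_\lambda$-moment. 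The projection $\pi_*\mu_q$ is supported on a set where $d_{\nu_\lambda}(x)=\alpha$, and a Billingsley-type entropy/mass estimate identifies its Hausdorff dimension as $q\alpha-\tau_{\nu_\lambda}(q)=\tau^*_{\nu_\lambda}(\alpha)$. Endpoints of $[\tau'_{\nu_\lambda}(+\infty),\tau'_{\nu_\lambda}(0+)]$ are then handled by taking limits $q\to\pm\infty$ and $q\to 0+$ together with upper semi-continuity of the Legendre transform.

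The main obstacle is carrying out the Gibbs construction under AWSC rather than under the stronger weak separation condition exploited in the Pisot case by Feng--Lau. Under WSC the overlap structure is bounded uniformly, so the symbolic thermodynamic formalism can be applied directly to the induced system on the AWSC-classes. Under AWSC one only has subexponential control on the multiplicity of overlaps at scale $\lambda^n$, and care is needed to show that these subexponential factors do not contaminate the exponent in the Legendre transform. In particular, transferring the local-dimension and Hausdorff-dimension estimates from $\mu_q$ on the symbolic space to $\pi_*\mu_q$ on $\R$ requires a variant of the variational principle that tolerates subexponential ``merging'' of symbolic cylinders, which I expect is the technical heart of the proof.
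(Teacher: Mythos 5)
Your proposal correctly identifies the two ingredients the paper uses at the outer level — AWSC for Salem-reciprocal Bernoulli convolutions, and a general multifractal-formalism theorem for AWSC self-conformal measures — and your upper-bound covering argument and your AWSC verification via Galois conjugates are both fine (the paper simply cites the AWSC fact from an earlier work rather than reproving it). But your plan for the lower bound, which is the substantive part, rests on a construction the paper explicitly rules out. You propose to build a Gibbs-like measure $\mu_q$ on symbolic space by grouping cylinders into AWSC-equivalence classes and then using a Billingsley-type argument on $\pi_*\mu_q$. The paper states in the introduction that precisely this kind of thermodynamic-formalism / matrix-product approach, which works under the weak separation condition, is \emph{not efficient} under AWSC: because the overlap multiplicity is only subexponentially bounded, the induced symbolic system is not finitely branching, the transfer operator is not a finite matrix cocycle, and there is no existence theorem for the Gibbs measure $\mu_q$ you want to invoke. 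You flag this difficulty yourself (``the technical heart of the proof''), but acknowledging it is not the same as overcoming it; as written the construction of $\mu_q$ is a genuine missing step, not a routine one.

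The paper avoids Gibbs measures entirely. Its mechanism (Theorem~\ref{thm-2.1} plus Theorem~\ref{thm-3.1}) is a \emph{local box-counting principle}: for any ball $B_{2^{-n}}(x)$ with $\nu(B_{2^{-n-1}}(x))>0$, for $q\in\Omega_+$ and $k\in\N$, once $m$ is large enough (with the threshold depending delicately on $n$, $q$, $k$, and $\nu(B_{2^{-n}}(x))/\nu(B_{2^{-n-1}}(x))$), there are at least $2^{m(\tau^*(\alpha)-1/k)}$ disjoint balls $B_{2^{-n-m}}(x_i)\subset B_{2^{-n}}(x)$ with controlled $\nu$-mass ratios and a doubling-type bound on $\nu(B_{2^{-n-m+1}}(x_i))/\nu(B_{2^{-n-m-1}}(x_i))$. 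This principle is proved by a direct counting argument using the AWSC bound $t_n$ and the almost-multiplicativity estimate \eqref{e-3.1} from \cite{Fen07}, with no reference to an invariant Gibbs state. The lower bound on $\dim_H E(\alpha)$ is then obtained by nesting this principle to build a Cantor-type Moran set $F\subset E(\alpha)$ and applying Proposition~\ref{pro-2.4} (a Moran-set dimension formula from \cite{FLW02}). This construction also handles, via an explicit interpolation between two sequences of $q$'s (Steps~2 and~3 of the proof of Theorem~\ref{thm-2.1}), those $\alpha$ that are not of the form $\tau'(q)$ for any $q$ where $\tau'$ exists, as well as the endpoint $\alpha=\tau'(+\infty)$; your sketch only addresses the differentiable case and waves at the endpoints with ``upper semi-continuity of the Legendre transform,'' which is not enough because non-emptiness of $E(\alpha)$ at such $\alpha$ is itself part of what must be proved. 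So the gap is twofold: the Gibbs measure you build your argument on is not known to exist under AWSC, and even granting it, you would still need a separate interpolation device to cover the full interval $[\tau'_{\nu_\lambda}(+\infty),\tau'_{\nu_\lambda}(0+)]$.
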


In short, the above theorem says that the Bernoulli convolution $\nu_\lambda$ fulfils the multifractal formalism over $q>0$, when $\lambda^{-1}$ is  a Salem number.   As an application, we obtain the following information about the range of local dimensions of $\ber$ associated with certain  Salem numbers.

\begin{thm}
\label{thm-1.2}
For $n\geq 4$, let  $\beta_n$ be the largest real root of the polynomials
$x^n-x^{n-1}-\cdots-x+1$, and let $\lambda_n=\beta_n^{-1}$. Then for $\lambda=\lambda_n$, $\tau'_{\nu_{\lambda}}(+\infty)<1\leq \tau'_{{\ber}}(0+)$; and hence
the range of local dimensions of $\ber$ contains a non-degenerate interval.
\end{thm}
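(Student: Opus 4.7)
The plan is to verify the two strict inequalities $\tau'_{\nu_{\lambda_n}}(+\infty)<1$ and $\tau'_{\nu_{\lambda_n}}(0+)\geq 1$ separately. Once these are in hand, the claim that the range of local dimensions of $\nu_{\lambda_n}$ contains a non-degenerate interval follows immediately by applying Theorem~\ref{thm-1.1}(i) to every $\alpha$ in the interval $[\tau'_{\nu_{\lambda_n}}(+\infty),\,\tau'_{\nu_{\lambda_n}}(0+)]$: each such $\alpha$ satisfies $E_{\nu_{\lambda_n}}(\alpha)\neq\emptyset$ and therefore lies in the range of local dimensions.

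The inequality $\tau'_{\nu_{\lambda_n}}(0+)\geq 1$ is soft. Since $\lambda_n\in(1/2,1)$, the support of $\nu_{\lambda_n}$ is the full interval $[-1/(1-\lambda_n),\,1/(1-\lambda_n)]$, so $\Theta_{\nu_{\lambda_n}}(0;r)\asymp r^{-1}$, giving $\tau_{\nu_{\lambda_n}}(0)=-1$. Together with $\tau_{\nu_{\lambda_n}}(1)=0$ (probability measure) and the concavity of $\tau$, this yields
\[
\tau'_{\nu_{\lambda_n}}(0+)\;\geq\;\tau_{\nu_{\lambda_n}}(1)-\tau_{\nu_{\lambda_n}}(0)\;=\;1.
\]

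The strict inequality $\tau'_{\nu_{\lambda_n}}(+\infty)<1$ is the substantive part, and I would derive it by exhibiting a point of unusually high concentration using the defining polynomial. Dividing $x^n-x^{n-1}-\cdots-x+1=0$ at $x=\beta_n$ by $\beta_n^n$ yields $1-\lambda_n-\lambda_n^2-\cdots-\lambda_n^{n-1}+\lambda_n^n=0$. Setting $\mathbf{a}:=(1,-1,-1,\ldots,-1,1)$ of length $n+1$, this reads $\sum_{k=0}^{n}a_k\lambda_n^k=\sum_{k=0}^{n}(-a_k)\lambda_n^k$; equivalently, the two IFS compositions $S_{a_0}\circ\cdots\circ S_{a_n}$ and $S_{-a_0}\circ\cdots\circ S_{-a_n}$ coincide identically. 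Concatenating $m$ such independent blocks produces $2^m$ distinct sequences in $\{\pm 1\}^N$ with $N:=m(n+1)$, all yielding the same composed IFS map and hence the same image of any fixed base point $x_0$. Since each sequence carries weight $2^{-N}$ under the natural Bernoulli measure and all corresponding cylinders are contained in a single ball $B_r(x_0)$ of radius $r\asymp\lambda_n^N$, one obtains
\[
\nu_{\lambda_n}\bigl(B_r(x_0)\bigr)\;\geq\;2^m\cdot 2^{-N}\;=\;2^{-Nn/(n+1)},
\]
and therefore $\log\nu_{\lambda_n}(B_r(x_0))/\log r\leq n\log 2/((n+1)\log\beta_n)+o(1)$.

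The main obstacle is arranging that the resulting exponent is strictly below $1$ for every $n\geq 4$, i.e., that $\beta_n^{n+1}>2^n$. For large $n$ this follows readily from the sharp estimate $\beta_n=2-O(2^{-n})$, obtained by substituting $\beta_n=2-\delta_n$ into the defining polynomial and keeping the leading-order balance $2^n\delta_n=O(1)$. For the small values (and in particular $n=4$, where the naive estimate is essentially tight), the plan is to enlarge the equivalence class at level $N$ by combining overlapping translates of the basic identification, producing strictly more than $2^m$ sequences per class and thereby pushing the exponent below $1$; alternatively, one may invoke the heavy-concentration estimates of Feng--Wang \cite{FeWa04}, whose analysis for $\nu_{\lambda_n}$ already implies the required bound. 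Finally, the standard relation $\tau'_{\mu}(+\infty)\leq \liminf_{r\to 0}\log M(r)/\log r$ with $M(r):=\sup_x\mu(B_r(x))$ converts the pointwise concentration at $x_0$ into the spectral inequality $\tau'_{\nu_{\lambda_n}}(+\infty)<1$, completing the proof.
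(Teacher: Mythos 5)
Your outline for the two inequalities tracks the paper's own proof closely. For $\tau'_{\nu_\lambda}(0+)\ge 1$, the paper's Lemma~\ref{lem-end}(i) establishes $\tau_\mu(q)\ge q-1$ on $(0,1)$ via a Jensen-type inequality and then uses $\tau_\mu(0)=-1$; your shortcut through $\tau(0)=-1$, $\tau(1)\ge 0$, and concavity reaches the same conclusion and is fine. For $\tau'_{\nu_\lambda}(+\infty)<1$, the paper iterates the self-similar identity $k$ times, groups words into equivalence classes $[I]$ with $S_I=S_J$, and invokes Lemma~\ref{lem-end}(ii): if some class has weight $\#[I]/2^k>\lambda^k$, then $\tau'(+\infty)\le \log(\#[I]/2^k)/\log(\lambda^k)<1$. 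Your route via the peak-measure function $M(r)$ and the $2^m$ coinciding $m(n+1)$-blocks at a distinguished point $x_0$ is an equivalent reformulation of that same mechanism, and the polynomial identity you use to produce the coincidence is precisely the one behind the paper's observation that $1\underbrace{2\cdots2}_{n-1}1\sim 2\underbrace{1\cdots1}_{n-1}2$.

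The genuine gap is $n=4$, and you have correctly flagged that the naive count is insufficient there: $\beta_4\approx 1.7221$, so $\beta_4^5\approx 15.1<2^4$, and the two-element class of length $n+1=5$ gives only $\tau'(+\infty)\le 4\log 2/(5\log\beta_4)>1$. Your two proposed fixes do not close this. The first (``combining overlapping translates'') is exactly what the paper does, but it is not automatic: the paper must exhibit a concrete word $I$ of length $k=15$ with $\#[I]=10$ and verify by computation that $10/2^{15}>\lambda_4^{15}$, i.e.\ $\beta_4^{15}>2^{15}/10\approx 3276.8$; finding such an $I$ and counting its equivalence class is a nontrivial combinatorial search, not a formal consequence of concatenating the basic relation. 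The second (invoking Feng--Wang \cite{FeWa04}) does not apply: that result concerns the density not being in $L^{3+\epsilon}$ only when $n$ is large enough, so it says nothing about $n=4$; and even where it applies, ``the density, if it exists, is not in $L^{3+\epsilon}$'' is a conditional statement that does not by itself yield the unconditional inequality $\tau(3+\epsilon)<2+\epsilon$ that your concavity argument would need. So the $n\ge 5$ portion of your argument is essentially the paper's; the $n=4$ case remains open in your write-up and requires the explicit word and count that the paper supplies.
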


The above results shed somewhat new light on the study of Bernoulli convolutions.  In \cite{Sol07} Solomyak asked whether the multifractal analysis can provide some information about the range of local dimensions of Bernoulli convolutions associated with non-Pisot numbers. Theorem \ref{thm-1.2} provides a positive answer.

Theorem \ref{thm-1.2} also provides a hint  that  $\nu_{\lambda_n}$ might be singular for all $n\geq 4$. Nevertheless, this hint is not direct, since there exists a self-similar measure $\mu$ on $\R$ such that $\mu$ is absolutely continuous and the range of local dimensions of $\mu$ contains a non-degenerate interval  on which the multifractal formalism is valid (see Proposition \ref{pro-5}). This unexpected phenomena looks quite interesting.

Let us give some historic remarks. In the literature there have been a lot of works considering the multifractal structure of Bernoulli convolutions associated with Pisot numbers (see, e.g., \cite{LePo96, Hu97, LaNg98, Por98, Lal98, LaNg99, LaNg99a, Fen03, FeOl03, Fen05, Fen09, FeLa09}). Here we give a brief summary. Assume that $\lambda^{-1}$ is a Pisot number in $(1,2)$. In this case,  the local distribution of $\ber$ can be characterized via matrix products, and as a result, the local dimensions of $\ber$ can be described as the Lyapunov exponents of the associated random matrices, whilst the $L^q$-spectrum  corresponds to the pressure function of  matrix products \cite{Lal98, Fen05, Fen03}. It was shown by    Lau and Ngai \cite{LaNg99} that   $\nu_\lambda$ satisfies the weak separation condition, and \eqref{e-k} holds for those $\alpha=\tau'_{\nu_\lambda}(q)$, $q>0$, provided that $\tau'_{\ber}(q)$ exists. Later in \cite{Fen03} we proved that, indeed,  $\tau_{\ber}$ is  differentiable on $(0, +\infty)$. Recently in  \cite{Fen09}, it was shown that  there exists an interval $I$ in the support of $\ber$ so that,  for the restriction of $\nu_\lambda$ on $I$,  the multifractal formalism is valid on the whole range of the local dimensions, regardless of whether there are phase transitions at $q<0$. This result is extended to self-similar measures satisfying the weak separation condition \cite{FeLa09}.
The $L^q$ spectra and the dimension spectra can be computed explicitly in some concrete cases.
For  $\lambda=\frac{\sqrt{5}-1}{2}$ (the golden ratio case), an explicit formula of $\tau_{\ber}(q)$ on $q>0$ was obtained in \cite{LaNg99a} and was extended to $q\in \R$ in \cite{Fen05}; it was shown in \cite{Fen05} that  $\tau_{\ber}$ has a non-differentiable point in $(-\infty,0)$ (the so-called {\it phase transition} behavior); nevertheless, \eqref{e-k} still holds for all those $\alpha\in [\tau'_{\ber}(+\infty), \tau'_{\ber}(-\infty)]$ \cite{FeOl03}.
The phase transition behaviors and exceptional multifractal phenomena were further found and considered in \cite{LaWa05,  Shm05, Tes06a} for other self-similar measures.
Rather than the golden ratio case, the explicit formulas of  the $L^q$ spectra and the dimension spectra of $\nu_\lambda$ were obtained in \cite{Fen05, OST05}  when $\lambda$ is the unique positive root of $x^n+x^{n-1}+\cdots+x-1$, $n\geq 3$;  in this case,   $\tau_{\ber}$ is  differentiable over $\R$.

When $\lambda$ is an arbitrary number in $(1/2,1)$, the only known result so far  is  that $E_{\ber}(\alpha)\neq \emptyset$ and \eqref{e-k} holds for those $\alpha=\tau'_{\ber}(q)$,  $q>1$, provided that $\tau'_{\ber}(q)$ exists at $q$; and this result extends to all self-conformal measures \cite{Fen07} \footnote{This result also holds for almost all projections of self-conformal measures  \cite{BaBH00}.}. In the case that $\lambda^{-1}$ is a Salem number, the condition $q>1$ can be relaxed to $q>0$  \cite{Fen07}.   However, it still remains open  whether $\tau_{\ber}$ is differentiable over $(0,\infty)$ for each $\lambda$. Although by  concavity $\tau_{\ber}$ has at most countably many non-differentiable points, no much information can be provided for  the range $\{\alpha:\; \alpha=\tau_{\ber}^\prime(q) \mbox{ for some } q>0\}$.

Let us illustrate the main idea in our  proof of Theorem \ref{thm-1.1}. Assume that  $\lambda^{-1}$ is a Salem number in $(1,2)$. The IFS $\{\lambda x-1,\lambda x+1\}$ may not satisfy the weak separation condition (see Remark \ref{rem-LN}), hence the previous approaches via matrix products and the thermodynamic formalism in \cite{Fen09, FeLa09}  are not efficient in this new setting.     For $n\in \N$, denote
$$
t_n=\sup_{x\in \R}\#\{S_{i_1\ldots i_n}:\; i_1\ldots i_n\in \{1,2\}^n, \; S_{i_1\ldots i_n}(K)\cap [x-\lambda^n, x+\lambda^n]\neq \emptyset  \},
$$
where $S_1, S_2$ are given as in \eqref{1.1}, $S_{i_1\ldots i_n}:=S_{i_1}\circ \cdots \circ S_{i_n}$ and $K:=[-\frac{1}{1-\lambda},\frac{1}{1-\lambda}]$ is the attractor of $\{S_1, S_2\}$.  The following simple property is our starting point (see, e.g. \cite{Fen07} for a proof):
\begin{equation}
\label{e-kk}
\lim_{n\to \infty}\frac{\log t_n}{n}=0.
\end{equation}
 Due to  this property, we can manage to setup the following local box-counting principle.
Let  $n\in \N$, $x\in \R$ with $\ber(B_{2^{-n-1}}(x))>0$. Let  $q>0$ so that $\alpha=\tau_{\ber}'(q)$ exists and let $k\in \N$.
Then when $m$ is suitably large (which can be controlled delicately by $n, q, k$ and $\ber(B_{2^{-n}}(x))/\ber(B_{2^{-n-1}}(x))$), there exist $N\geq 2^{m(\tau_{\ber}^*(\alpha)-1/k)}$ many disjoint balls $B_{2^{-n-m}}(x_i)$, $i=1,\ldots, N$, contained in $B_{2^{-n}}(x)$ such that
$$
\frac{\ber(B_{2^{-n-m}}(x_i))}{\ber(B_{2^{-n}}(x))}\in  \left(2^{-m(\alpha+1/k)},\;2^{-m(\alpha-1/k)}\right),
$$
and $\ber(B_{2^{-n-m+1}}(x_i))/\ber(B_{2^{-n-m-1}}(x_i))$ is bounded from above by a constant independent of  $n,m$.  This local box-counting principle is much stronger than  the standard box-counting principle originated in \cite{H86} (see also, Proposition 3.3 in \cite{FeLa09}). According to this principle,
for any $\alpha\in [\tau'_{\nu_\lambda}(+\infty), \tau'_{\ber}(0+)]$, we can give a delicate construction of a Cantor-type subset of
$E_{\ber}(\alpha)$ with Moran structure such that its Hausdorff dimension is greater or equal to  $\tau_{\ber}^*(\alpha)$; this shows that $\dim_HE_{\ber}(\alpha)=\tau_{\ber}^*(\alpha)$, since the upper bound $\dim_HE_{\ber}(\alpha)\leq \tau_{\ber}^*(\alpha)$ always holds (see, e.g., Theorem 4.1 in \cite{LaNg99}).

 Using the similar idea, we can extend  the result of Theorem \ref{thm-1.1}  to  any self-conformal measure which  satisfies  the asymptotically weak separation condition (see Def.~\ref{de-3.1}). That is,
 \begin{thm}
\label{thm-1.3} Let $\nu$ be a self-conformal measure on $\R^d$ satisfying the asymptotically weak separation condition.  Then
for $\alpha\in [\tau'_{\nu}(+\infty), \tau'_{\nu}(0+)]$,
$E_{\nu}(\alpha)\neq \emptyset$ and $\dim_HE_{\nu}(\alpha)=\tau^*_{\nu}(\alpha)$.
 \end{thm}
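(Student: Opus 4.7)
The strategy parallels the proof of Theorem~\ref{thm-1.1}, substituting the exact scaling of $\{S_1,S_2\}$ by bounded distortion estimates for the conformal IFS $\{\phi_i\}_{i\in\A}$ that generates $\nu$. The upper bound $\dim_H E_\nu(\alpha)\le\tau_\nu^*(\alpha)$ is standard for any finite compactly supported Borel measure (cf.\ Theorem~4.1 of \cite{LaNg99}), so the work lies entirely in the lower bound together with the non-emptiness $E_\nu(\alpha)\neq\emptyset$.

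The central new ingredient is the analog of \eqref{e-kk}: the asymptotically weak separation condition supplies a sub-exponentially growing bound $t_n$ on the multiplicity of cylinders at level $n$ whose images meet any ball of radius comparable to the cylinder size. Combined with the bounded distortion of $\{\phi_i\}$, which makes cylinder diameters comparable up to a constant to the derivative along the coding word, this reproduces the conformal version of the local box-counting principle stated in the introduction. Namely, for every $q>0$ at which $\tau_\nu'(q)=\alpha$ exists, every $k\in\N$, and every ball $B_r(x)$ with $\nu(B_r(x))>0$, one can select a scale $r'\ll r$ (controlled by $r, q, k$ and the ratio $\nu(B_r(x))/\nu(B_{r/2}(x))$) and find $N\ge(r/r')^{\tau_\nu^*(\alpha)-1/k}$ disjoint balls $B_{r'}(x_i)\subset B_r(x)$ such that
\[
(r'/r)^{\alpha+1/k}\;\le\;\frac{\nu(B_{r'}(x_i))}{\nu(B_r(x))}\;\le\;(r'/r)^{\alpha-1/k},
\]
accompanied by a uniform bound on $\nu(B_{2r'}(x_i))/\nu(B_{r'/2}(x_i))$. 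AWSC controls how many essentially equal-scale cylinders can cluster in $B_r(x)$, while bounded distortion is what guarantees that the cylinders counted really do share a common scale.

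With the principle in hand, the lower bound follows from a Moran-type construction. Choose $q_j$ with $\tau_\nu'(q_j)\to\alpha$ existing, $k_j\to\infty$, and scales $r_0>r_1>\cdots\to 0$; iteratively apply the principle to replace every ball produced at stage $j-1$ by at least $(r_{j-1}/r_j)^{\tau_\nu^*(\alpha)-1/k_j}$ disjoint sub-balls of radius $r_j$ on which the prescribed mass ratio holds. The nested intersections give a Cantor-like set $F$; the prescribed ratios across levels, together with the doubling-type interpolation between consecutive scales, force $F\subset E_\nu(\alpha)$ (in particular $E_\nu(\alpha)\neq\emptyset$), while the natural Borel measure supported on $F$ has local dimension at least $\tau_\nu^*(\alpha)$ everywhere, so the mass distribution principle yields $\dim_H F\ge\tau_\nu^*(\alpha)$.

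The principal obstacle I expect is establishing the local box-counting principle with the right uniformity: one must group cylinders into generations of essentially common derivative, show that a delicate choice of such a generation realizes the correct mass ratio for $q$ near a subgradient $q_\alpha$ of $\tau_\nu$ at $\alpha$, and invoke AWSC only after bounded distortion has placed the count into a form where it applies. A secondary subtlety is the treatment of the boundary values $\alpha=\tau_\nu'(+\infty)$ and $\alpha=\tau_\nu'(0+)$, which need not be attained as $\tau_\nu'(q)$ at any finite $q$; here one diagonalizes across a sequence $q_j\to+\infty$ (resp.\ $q_j\to 0+$) in the Moran construction and uses the concavity and upper semicontinuity of $\tau_\nu^*$ to pass to the limit.
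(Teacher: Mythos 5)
Your overall architecture --- upper bound from \cite{LaNg99}, lower bound via a local box-counting principle feeding a Moran (Cantor-type) construction, with the AWSC supplying sub-exponential multiplicity bounds --- is the same skeleton the paper uses. However, there are two substantive gaps in how you propose to flesh it out.

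First, you lean on ``bounded distortion'' to reduce the conformal case to the similarity case, but the paper's theorem is stated for $C^1$-conformal IFS, where bounded distortion is \emph{not} available. The paper's Proposition~\ref{pro-3.1} (drawn from \cite{Fen07}) replaces a uniform distortion constant by a sub-exponentially growing sequence $(c_n)$ satisfying $\frac{1}{n}\log c_n\to 0$, and the entire box-counting machinery (Theorem~\ref{thm-3.1}, via the threshold $h_n(q,k)$ in \eqref{e-r1}) is engineered so that this growth is absorbed alongside $\log t_n$. If you assume bounded distortion you are, in effect, proving the result only for $C^{1+\varepsilon}$ systems. Either restrict the claim or replace the constant by the $(c_n)$ estimate and check that the Moran construction's threshold inequalities still close.

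Second, and more seriously, your diagonalization handles only the endpoint values $\tau'_\nu(+\infty)$ and $\tau'_\nu(0+)$. But for a general self-conformal measure, $\tau_\nu$ need not be differentiable on $(0,\infty)$; at a kink $q_0>0$ there is an entire interval $\bigl(\tau'_\nu(q_0+),\,\tau'_\nu(q_0-)\bigr)$ of exponents $\alpha$ that are \emph{not} limits of derivative values $\tau'_\nu(q_j)$, so your approximation scheme simply never produces them. The paper deals with this in Step~2 of the proof of Theorem~\ref{thm-2.1}: one writes $\alpha=p\,\tau'(q_0+)+(1-p)\,\tau'(q_0-)$, approximates $q_0$ from both sides by $q_{1,j}\searrow q_0$ and $q_{2,j}\nearrow q_0$ in $\Omega$, and interleaves the two scales in the Moran construction at asymptotic frequency $p$ versus $1-p$ (the $\{\ell\sqrt{2}\}$ selection rule in \eqref{e-z1}). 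The resulting set lies in $E(\alpha)$ with dimension at least $p\,\tau^*(\tau'(q_0+))+(1-p)\,\tau^*(\tau'(q_0-))=\alpha q_0-\tau(q_0)=\tau^*(\alpha)$. Without some such two-sequence alternation, your proof covers only $\alpha\in\overline{\{\tau'_\nu(q):q\in\Omega_+\}}$, which can be a proper subset of $[\tau'_\nu(+\infty),\tau'_\nu(0+)]$.

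A minor remark: you invoke the mass distribution principle on the Moran set, while the paper computes $\dim_H F$ exactly via Proposition~\ref{pro-2.4}; either works, but the latter requires checking $\log c_\ell/\log M_\ell\to 0$, which is where the condition $n_\ell/(n_1+\cdots+n_{\ell-1})\to 0$ enters. Worth being explicit that your chosen scales $r_j$ satisfy the analogous ratio condition.
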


We remark that the asymptotically weak separation condition is strictly weaker than the weak separation condition introduced in \cite{LaNg99} (see Remark \ref{rem-LN}).

Shortly after the first version of this paper was completed, Jordan, Shmerkin and Solomyak \cite{JSS11} obtained an interesting related result:  for every $\lambda\in (1/2, \gamma)$ where $\gamma\approx 0.554958$ is the root of $1 = x^{-1} +\sum_{n=1}^\infty x^{-2n}$, and $p\in (0, 1/2)$, the biased Bernoulli convolution $\nu_\lambda^p$ (which is the the infinite convolution product of the distributions $p\delta_{-\lambda^n}+(1-p)\delta _{\lambda^n}$)  always contains a non-trivial interval in the range of its local dimensions. It is unknown  whether or not the multifractal formalism holds   for $\nu_\lambda^p$ on this interval.

The paper is arranged in the following manner: in Sect.~\ref{S-2}, we show that for a general measure $\mu$ in $\R^d$, the multifractal formalism is valid if certain local box-counting principle holds for $\mu$; we prove Theorem \ref{thm-1.3} in Sect.~\ref{S-3} by showing that  this local box-counting principle holds for self-conformal measures on $\R^d$ satisfying the asymptotically weak separation condition; in Sect.~\ref{S-4}, we prove Theorem \ref{thm-1.2}; in Sect.~\ref{S-5}, we construct an example of absolutely continuous self-similar measure on $\R$ with non-trivial range of local dimensions.

\section{A general scheme for the validity of  the multifractal formalism}
\label{S-2}
Let $\mu$ be  a finite Borel measure $\mu$ in $\R^d$ with compact support.
 Let $\tau(q):=\tau_\mu(q)$ be the $L^q$-spectrum of $\mu$, and let $E(\alpha):=E_\mu(\alpha)$ denote the level set of $\mu$.  (See Sect.\ref{S-1} for the definitions.) Assume that $\tau(q)\in \R \mbox{ for each }q\in \R$. In this section we show that the multifractal formalism is  valid for $\mu$ if  certain local box-counting principle holds for $\mu$.

 Define
\begin{equation}
\label{e-key}
\Omega=\{q\in \R:\;  \mbox{ the derivative $\tau'(q)$ exists}\}\quad \mbox{and}\quad  \Omega_+=\Omega\cap (0,\infty).
\end{equation}
Since $\tau$ is concave on $\R$, $\Omega$ is dense in $\R$ and $\Omega_+$ is dense in $(0,\infty)$.
\begin{de}
\label{de-2.1}
{\rm
We say that $\mu$ has an  {\it asymptotically good  multifractal  structure over $\R$} (resp., $\R_+$) if there is a dense subset  $\Lambda$ of $\Omega$ (resp. $\Omega_+$) such that for each $q\in \Lambda$ and $k\in \N$,
there exist positive numbers $a(q,k)$, $b(q,k)$, $f_n(q,k)$, $n=0, 1, 2,\cdots$,  such that the following properties hold:
\begin{itemize}
\item[(i)]
\begin{equation}\label{e-1.0}
 \lim_{k\to\infty} b(q,k)=0,\qquad \lim_{n\to \infty}f_n(q,k)/n=0.
\end{equation}
\item[(ii)]Let $n\geq 0$ and $x\in \R$ so that  $\mu(B_{2^{-n-1}}(x))>0$. Then for any integer $m$ with
\begin{equation}
\label{e-1.2}
m\geq f_n(q,k)+ a(q,k) \log \frac{\mu(B_{2^{-n}}(x))}{\mu(B_{2^{-n-1}}(x))},
\end{equation}
there are disjoint balls $B_{2^{-n-m}}(x_i)\subset B_{2^{-n}}(x)$, $i=1,\ldots,N$, such that
\begin{equation*}
\label{e-1.3}
N\geq  2^{m(\tau^\prime(q) q-\tau(q)-b(q,k))},
\end{equation*}
\begin{equation*}
\label{e-1.4}
2^{-m(\tau'(q)+1/k)}\leq \frac{\mu(B_{2^{-n-m}}(x_i))}{\mu(B_{2^{-n}}(x))}\leq 2^{-m(\tau'(q)-1/k)},
\end{equation*}
and
\begin{equation*}
\label{e-1.4'}
\frac{\mu(B_{2^{-n-m+1}}(x_i))}{\mu(B_{2^{-n-m-1}}(x_i))}\leq f_{n+m}(q,k).
\end{equation*}
\end{itemize}
}
\end{de}
\bigskip

The main result in this section is the following.

\begin{thm}
\label{thm-2.1}
 {\rm (a)} Assume that $\mu$ has an asymptotically good multifractal structure over $\R$. Let $\alpha_{\min}=\lim_{q\to \infty}\tau(q)/q$ and $\alpha_{\max}=\lim_{q\to -\infty} \tau(q)/q$. Then
$E(\alpha)\neq \emptyset$ if and only if  $\alpha\in [\alpha_{\min},\, \alpha_{\max}]\cap \R$.\footnote{$\alpha_{\min}$ is always non-negative and finite. It is possible that $\alpha_{\max}=+\infty$.} Furthermore, for any $\alpha\in [\alpha_{\min},\, \alpha_{\max}]\cap \R$, $$\dim_HE(\alpha)=\tau^*(\alpha)=\inf\{\alpha q-\tau(q):\; q\in \R\}.$$
{\rm (b)} Assume that $\mu$ has an asymptotical multifractal structure over $\R^+$. Then for $\alpha\in [\alpha_{\min}, \tau^\prime(0+)]$, we have $E(\alpha)\neq \emptyset$ and $\dim_HE(\alpha)=\tau^*(\alpha)$.
\end{thm}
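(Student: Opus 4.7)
The upper bound $\dim_HE(\alpha)\leq \tau^*(\alpha)$ is classical (see, e.g., Theorem 4.1 of \cite{LaNg99}) and holds whenever $\tau$ is finite on $\R$, so the task reduces to proving $E(\alpha)\ne\emptyset$ and $\dim_HE(\alpha)\geq \tau^*(\alpha)$ on the stated range. My plan is to construct a Moran-type Cantor subset $K_\alpha\subseteq E(\alpha)$ whose Hausdorff dimension is at least $\tau^*(\alpha)$, which will establish both claims at once.

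First I would approximate $\alpha$ by differentiability points of $\tau$. Concavity implies that $\tau'$ exists outside a countable set and is continuous on $\Omega$, so using the density of $\Lambda$ in $\Omega$ (resp.\ in $\Omega_+$ for part (b)), I pick $q_j\in\Lambda$ with $\alpha_j:=\tau'(q_j)\to\alpha$. By the Legendre relation this gives $\tau^*(\alpha_j)=\alpha_jq_j-\tau(q_j)\to\tau^*(\alpha)$; when $\alpha$ is an endpoint $\alpha_{\min}$ (or $\alpha_{\max}$) the convergence is supplied by the standard fact that $\tau'(q)q-\tau(q)\to\tau^*(\alpha_{\min})$ as $q\to+\infty$ for a concave $\tau$ with finite asymptotic slope. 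I also fix $k_j\uparrow\infty$ so that $b(q_j,k_j)\to 0$. For part (b) the constraint $q_j>0$ caps $\alpha_j$ above by $\tau'(0+)$, matching the stated range.

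Next I would build $K_\alpha$ inductively across increasing scales $n_0<n_1<\cdots$. Starting from any $x_0\in\mathrm{supp}(\mu)$ with $\mu(B_{2^{-n_0-1}}(x_0))>0$, and assuming the level-$(j-1)$ balls $B_{2^{-n_{j-1}}}(y)$ have been constructed, I invoke Definition~\ref{de-2.1} with parameters $(q_j,k_j)$ and step $m_j:=n_j-n_{j-1}$ inside each such ball. The hypothesis (\ref{e-1.2}) requires
\[
m_j\geq f_{n_{j-1}}(q_j,k_j)+a(q_j,k_j)\log\frac{\mu(B_{2^{-n_{j-1}}}(y))}{\mu(B_{2^{-n_{j-1}-1}}(y))}.
\]
Crucially, the log-ratio is bounded above by $\log f_{n_{j-1}}(q_{j-1},k_{j-1})$ from the last conclusion of Definition~\ref{de-2.1} applied in the previous stage, hence is $o(n_{j-1})$; the term $f_{n_{j-1}}(q_j,k_j)$ is likewise $o(n_{j-1})$. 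Consequently I can choose $m_j=o(n_{j-1})$ (so that $n_j/n_{j-1}\to 1$) while still satisfying (\ref{e-1.2}), and the definition then yields $N_j\geq 2^{m_j(\alpha_jq_j-\tau(q_j)-b(q_j,k_j))}$ disjoint children of radius $2^{-n_j}$ with mass ratios in the window $[2^{-m_j(\alpha_j+1/k_j)},\,2^{-m_j(\alpha_j-1/k_j)}]$.

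Finally I would set $K_\alpha$ to be the nested intersection and equip it with the probability measure $\eta$ obtained by distributing mass uniformly among children at each level. Multiplying the window bounds along the descending chain of ball centres containing any $x\in K_\alpha$ gives $\mu(B_{2^{-n_j}}(x_j))=2^{-n_j(\alpha+o(1))}$, and the $f_{n_j}$-distortion bound combined with $n_j/n_{j-1}\to 1$ propagates this to all $r\in[2^{-n_j},2^{-n_{j-1}}]$, yielding $d_\mu(x)=\alpha$; hence $K_\alpha\subseteq E(\alpha)$ and $E(\alpha)\neq\emptyset$. On the Hausdorff side, $\eta(B_{2^{-n_j}}(y))=(N_1\cdots N_j)^{-1}$ satisfies
\[
\frac{-\log\eta(B_{2^{-n_j}}(y))}{n_j\log 2}\geq \frac{1}{n_j}\sum_{\ell=1}^j m_\ell\bigl(\alpha_\ell q_\ell-\tau(q_\ell)-b(q_\ell,k_\ell)\bigr)\longrightarrow\tau^*(\alpha)
\]
by a weighted Ces\`aro argument, and the same intermediate-scale control lets the estimate survive between consecutive levels, so the mass distribution principle gives $\dim_HK_\alpha\geq\tau^*(\alpha)$. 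The main technical obstacle I anticipate is precisely this scale-balancing: $m_j$ must be large enough that (\ref{e-1.2}) holds and $N_j$ is correspondingly large, yet $m_j=o(n_{j-1})$ to prevent intermediate-scale blow-up of either local dimension or Frostman exponent. The sub-linearity of $f_n(q,k)/n$ and the two-sided nature of the mass-ratio window built into Definition~\ref{de-2.1} are exactly what reconcile these competing constraints.
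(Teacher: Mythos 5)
Your construction is essentially the paper's Step 1: approximate $\alpha$ by $\tau'(q_j)$ with $q_j\in\Lambda$, build a Moran set level-by-level with step sizes $m_j=o(n_{j-1})$, and read off the dimension via a Ces\`aro argument (the paper uses the Moran-set dimension formula of Proposition~\ref{pro-2.4}, which amounts to the same mass-distribution computation). Your scale-balancing considerations are sound, and the upper bound is handled the same way.

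However, there is a genuine gap. You assert that density of $\Lambda$ in $\Omega$ lets you ``pick $q_j\in\Lambda$ with $\tau'(q_j)\to\alpha$'' for every $\alpha\in[\alpha_{\min},\alpha_{\max}]$. That is false when $\tau$ has a corner. Since $\tau$ is concave, $\tau'$ is a decreasing function on $\Omega$, and if $\tau$ is non-differentiable at some $q_0$ (a phase transition, which does occur for these measures, e.g.\ the golden-ratio Bernoulli convolution), then the image $\tau'(\Omega)$ omits the entire open interval $(\tau'(q_0+),\,\tau'(q_0-))$. For any $\alpha$ strictly inside such a gap, no sequence $\tau'(q_j)$ converges to $\alpha$, so your single-target Moran construction never applies, and the proof does not yield $E(\alpha)\neq\emptyset$ nor the lower bound $\dim_HE(\alpha)\geq\tau^*(\alpha)$. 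This is precisely why the paper's proof has a separate Step 2: writing $\alpha=p\,\tau'(q_0+)+(1-p)\,\tau'(q_0-)$, it builds an \emph{interleaved} Moran construction in which a fraction $p$ of the levels target $\tau'(q_0+)$ and the remaining fraction target $\tau'(q_0-)$ (the choice at level $\ell$ governed by the fractional part of $\ell\sqrt2$ to guarantee the correct asymptotic frequencies). The resulting set lies in $E(\alpha)$ by averaging of the per-level mass exponents, and its dimension is at least $p\,\tau^*(\tau'(q_0+))+(1-p)\,\tau^*(\tau'(q_0-))$, which equals $\alpha q_0-\tau(q_0)=\tau^*(\alpha)$ because $\tau^*$ is affine on the segment $[\tau'(q_0+),\tau'(q_0-)]$ dual to the corner. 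Without this second construction your argument covers only $\alpha\in\overline{\tau'(\Omega)}$, which can be a proper subset of $[\alpha_{\min},\alpha_{\max}]$. (You also omit the ``only if'' direction $E(\alpha)\neq\emptyset\Rightarrow\alpha\in[\alpha_{\min},\alpha_{\max}]$, but that is a one-line estimate from the definition of $\Theta_\mu$.)
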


A key idea in the proof of the above theorem is  to construct Cantor-type subsets of $E(\alpha)$ with a special  Moran construction.

\begin{de}
{\rm
Let $B\subset {\R}^d$ be a closed ball. Let $\{N_\ell\}_{\ell\geq 1}$ be a sequence
of positive integers. Let $D=\bigcup_{\ell\geq 0}D_\ell$ with
$D_0=\{\emptyset\}$ and $D_\ell=\{\omega = (i_1i_2\cdots i_\ell): 1\leq
i_j\leq N_j, 1\leq j\leq \ell\}$.  Suppose that ${\mathcal G}=\{B_\omega:
\omega\in D\}$ is a collection of  closed balls of radius $r_\omega$  in ${\R}^d$. We say that
${\mathcal G}$ fulfills the {\it Moran structure}, provided it satisfies the following conditions:
\begin{itemize}
\item[(1)] $B_\emptyset = B$, $B_{\omega j} \subset
            B_\omega $ for any $\omega \in D_{\ell-1}, 1 \leq j \leq
N_\ell$;
\item[(2)] $B_{\omega}\cap B_{\omega'}= \emptyset$ for $\omega,\omega'\in D_\ell$ with $\omega\neq \omega'$.

\item[(3)] $\lim_{k\rightarrow \infty} \max_{\omega\in D_\ell}
r_\omega = 0$;

\item[(4)] For all $\omega \eta \neq \omega^\prime \eta, \ \omega,
\omega^\prime \in D_m, \ \omega \eta,  \omega^\prime \eta \in D_n, m \leq n$,
\begin{equation*}
 \frac {r_{\omega \eta}}{r_\omega } \
= \frac {r_{\omega^\prime \eta}} {r_{\omega^\prime} }. \
 \end{equation*}
\end{itemize}

\noindent If ${\mathcal G}$ fulfills the above Moran structure, we
call
$$
F=\bigcap_{\ell=1}^\infty\bigcup_{\omega\in D_\ell} B_\omega
$$
the {\it  Moran set} associated with
${\mathcal G}$.
}

\end{de}

For $\ell\in \N$, let
$$
c_\ell=\min_{(i_1\cdots i_\ell)\in D_\ell} \frac{r_{i_1\cdots
i_\ell}}{r_{i_1\cdots i_{\ell-1}}},\ \ \ M_\ell=\max\limits_{(i_1\cdots
i_\ell)\in D_\ell} r_{i_1\cdots i_\ell}.
$$

\medskip

\begin{pro} \label{pro-2.4}
{\cite[Proposition 3.1]{FLW02}}.
For a Moran set $F$ defined as above, suppose furthermore
\begin{equation}\label{e-2.7}
\lim_{k\rightarrow \infty}\frac{\log c_\ell}{\log M_\ell} = 0.
\end{equation}
 Then we have
$$
\dim_HF=\liminf_{\ell\rightarrow \infty}s_\ell,\ \
$$
where $s_\ell$
satisfies the equation $\displaystyle \sum_{\omega \in
D_\ell}r_\omega^{s_\ell}=1$ for each $k$.
\end{pro}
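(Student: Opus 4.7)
The plan is to establish $\dim_H F\leq \liminf_{\ell} s_\ell$ and $\dim_H F\geq \liminf_{\ell} s_\ell$ separately; the lower bound is the substantive step.

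For the upper bound I would observe that $\{B_\omega\}_{\omega\in D_\ell}$ is an explicit cover of $F$ by balls of radius at most $M_\ell$. Fixing any $s>\liminf_{\ell} s_\ell$ and choosing a subsequence $\{\ell_k\}$ with $s_{\ell_k}<s$, the defining identity $\sum_{\omega\in D_{\ell_k}} r_\omega^{s_{\ell_k}}=1$ together with the bound $r_\omega\leq M_{\ell_k}\to 0$ yields $\sum_{\omega\in D_{\ell_k}}(2r_\omega)^s\leq 2^s$ once $k$ is large. Hence $\mathcal{H}^s(F)\leq 2^s<\infty$, so $\dim_H F\leq s$, and letting $s\downarrow \liminf_{\ell} s_\ell$ completes this direction.

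For the lower bound I would invoke the mass distribution principle: writing $s^{\ast}=\liminf_{\ell} s_\ell$, it suffices to produce, for each $s<s^{\ast}$, a probability measure $\mu$ on $F$ and a constant $C=C(s)$ with $\mu(B(x,r))\leq C\, r^{s}$ for every $x\in F$ and every sufficiently small $r>0$. I would take $\mu$ to be a weak-$\ast$ limit along a subsequence $\{\ell_k\}$ with $s_{\ell_k}\to s^{\ast}$ of the measures $\mu_\ell$ that assign mass $r_\omega^{s_\ell}$ to each cell $B_\omega$ ($\omega\in D_\ell$), distributed uniformly within $B_\omega$. Weak-$\ast$ compactness on the compact set $B=B_{\emptyset}$ produces the limit, which is supported on $F$ because the cells are nested, disjoint at each level, and have radii shrinking to zero (conditions (1)--(3) of the Moran structure).

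The main technical obstacle will be controlling $\mu(B(x,r))$ for a generic ball $B(x,r)$ with $x\in F$ and small $r$. Given such a ball, I would locate the level $\ell=\ell(r)$ with $M_{\ell+1}\leq r<M_\ell$, and bound $\mu(B(x,r))$ by the sum $\sum r_\omega^{s_{\ell+1}}$ over those $\omega\in D_{\ell+1}$ with $B_\omega\cap B(x,r)\neq\emptyset$. These cells are disjoint (condition (2)) and contained in $B(x,3r)$, so a volume-packing argument in $\R^d$ bounds their cardinality by a constant times $(r/c_{\ell+1}M_\ell)^d$, using the definition of $c_{\ell+1}$ to control the minimum cell radius at level $\ell+1$. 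It is precisely here that the hypothesis $\log c_\ell/\log M_\ell\to 0$ becomes indispensable: it lets the factor $c_{\ell+1}^{-d}$ be absorbed as $r^{-\epsilon}$ for arbitrarily small $\epsilon>0$, yielding $\mu(B(x,r))\leq C\, r^{s_{\ell+1}-\epsilon}$. Restricting $\ell$ to the subsequence $\{\ell_k\}$ and letting $\epsilon\downarrow 0$, the mass distribution principle gives $\dim_H F\geq s^{\ast}$. Combined with the upper bound, this delivers the claimed identity $\dim_H F=\liminf_{\ell} s_\ell$.
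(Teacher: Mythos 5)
The paper does not reprove Proposition~\ref{pro-2.4}; it is quoted from~[FLW02], so I am comparing your proposal with a correct proof rather than with text in the paper. Your upper bound is fine, and the mass--distribution strategy for the lower bound is the right one and is what~[FLW02] uses.

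The gap is in the packing count ``$\le C\,(r/(c_{\ell+1}M_\ell))^d$''. Condition~(4) of the Moran structure gives $r_\omega=r_\emptyset\prod_{j\le\ell}\rho_{j,i_j}$ where the ratio $\rho_{j,i}$ depends only on the level $j$ and the index $i$ --- but it \emph{does} depend on $i$ in general. Hence the smallest radius at level $\ell+1$ is $c_{\ell+1}\cdot\min_{\omega\in D_\ell}r_\omega$, not $c_{\ell+1}M_\ell$ (recall $M_\ell$ is the \emph{largest} level-$\ell$ radius), and the two can differ by a factor that is exponentially small in $\ell$. Your volume bound on the number of level-$(\ell+1)$ cells meeting $B(x,r)$ therefore does not follow, and the exponent extracted from $\log c_\ell/\log M_\ell\to 0$ can be off by an unbounded amount. (Your computation \emph{is} correct when every level-$\ell$ cell has the same radius $M_\ell$, which is exactly the situation in the Moran sets built in Steps~1 and~2 of the proof of Theorem~\ref{thm-2.1}, where $r_\omega=2^{-(n_1+\cdots+n_\ell)}$, but not for a general Moran set as in Definition~2.3.) The standard repair is to not work at a fixed level but to take the stopping-time antichain $\Lambda$ of cells with $r_\omega<r\le r_{\tilde\omega}$ that meet $B(x,r)$: these are disjoint, lie in $B(x,3r)$, and satisfy $r_\omega\ge c_{|\omega|}r$ with $|\omega|\le\ell_1$ where $M_{\ell_1}<r\le M_{\ell_1-1}$; since $M_{\ell_1}\ge c_{\ell_1}r$, the hypothesis $\log c_\ell/\log M_\ell\to0$ forces $\min_{j\le\ell_1}c_j\ge r^{\epsilon/(1-\epsilon)}$ for all small $r$, and the volume argument then gives $\#\Lambda\le C\, r^{-d\epsilon/(1-\epsilon)}$ as you intended. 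A secondary issue concerns the measure: the weak-$\ast$ limit you describe assigns $\mu(B_\omega)=r_\omega^{s^*}\big/\sum_{\omega'\in D_{|\omega|}}r_{\omega'}^{s^*}$, and the denominator can tend to $0$ along levels $j$ with $s_j<s^*$ (when $(s^*-s_j)\,\bigl|\log\min_{\omega\in D_j}r_\omega\bigr|$ is unbounded), so the bound $\mu(B_\omega)\le C r_\omega^t$ you need may fail. It is cleaner to fix the target exponent $t<s^*$ in advance and define $\mu_t(B_\omega)=r_\omega^t\big/\sum_{\omega'\in D_{|\omega|}}r_{\omega'}^t$; consistency across levels is immediate from condition~(4), and since $\liminf_j s_j=s^*>t$ one has $\sum_{\omega'\in D_j}r_{\omega'}^t\ge1$ for all large $j$, giving $\mu_t(B_\omega)\le C r_\omega^t$ directly.
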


\bigskip

\begin{proof}[Proof of Theorem \ref{thm-2.1}] We only prove part (a) of the theorem, since the proof of part (b) is  essentially identical. We divide the proof into several steps.

{\em Step 1. If $\alpha\in  \overline{\{\tau^\prime(q):\; q\in \Omega\}}$, then $E(\alpha)\neq \emptyset$ and $\dim_HE(\alpha)\geq \tau^*(\alpha)$.}

Let $\Lambda$ and $a(q,k),b(q,k), f_n(q,k)$ ($q\in \Lambda$, $k,n\in \N$) be given as in Def.~\ref{de-2.1}.
We can assume that $\lim_{n\to \infty} f_n(q,k)=\infty$, since in Def.~\ref{de-2.1},  we can change $f_n(q,k)$ to $\max\{f_n(q,k),\;\log n\}$ with no harm.

Fix  $\alpha\in  \overline{\{\tau^\prime(q):\; q\in \Omega\}}$. Since $\tau$ is  concave on $\R$ and $\Lambda$ is dense in $\Omega$, there exists a sequence $(q_j)_{j=1}^\infty\subset \Lambda$
such that $\lim_{j\to \infty}\tau^\prime(q_j)=\alpha$. Note that $\tau^*$ is also  concave (and hence lower semi-continuous)  on $[\alpha_{\min},\alpha_{\max}]\cap \R$ (see \cite{Roc-book}).
Hence
\begin{equation}
\label{e-v3}
\tau^*(\alpha)\leq \liminf_{j\to \infty}\tau^*(\tau^\prime(q_j))=\liminf_{j\to \infty}(\tau^\prime(q_j)q_j-\tau(q_j)).
\end{equation}
 Take a sequence $(k_j)_{j=1}^\infty$ of positive integers such that $\lim_{j\to \infty} k_j=\infty$ and
\begin{equation}
\label{e-v4}
b_j:=b(q_j,k_j)\rightarrow 0, \mbox{ as } j\to \infty.
\end{equation}
Pick $x_0\in \R$ such that  $\mu(B_{1/2}(x_0))>0$. Set
$$A_0=\frac{\mu(B_1(x_0))}{\mu(B_{1/2}(x_0))}.$$
Clearly $1\leq A_0<\infty$.
Then due to \eqref{e-1.0},  we can define a sequence $(L_j)_{j=1}^\infty$ of positive integers recursively such that $L_1\geq 2$ and
\begin{equation}
\label{e-a1}
n\geq f_0(q_1,k_1) +a(q_1,k_1)\log A_0\quad \mbox{ if } n\geq L_1
\end{equation}
 and
\begin{equation}
\label{e-a1'}
n\geq f_n(q_1,k_1) +a(q_1,k_1)\log f_n(q_1, k_1)\quad \mbox{ if } n\geq L_1
\end{equation}
and
\begin{equation}
\label{e-a2}
\frac{n}{j+1}\geq f_n(q_{j+1},k_{j+1})+ a(q_{j+1},k_{j+1})\log (f_n(q_{j+1},k_{j+1})+ f_n(q_{j},k_{j}))
\end{equation}
if $n\geq L_j$, $j=1,2, \ldots.$

  Construct a sequence of positive integers $(n_\ell)_{\ell=1}^\infty$  recursively  by setting  $n_1=L_1$ and for $\ell\geq 2$,
 \begin{equation}
 \label{e-a3}
 n_\ell=\mbox{ the smallest integer greater than } (n_1+\cdots+n_{\ell-1})/\theta(\ell),
    \end{equation}
        where  $\theta(\ell)$  denotes the unique positive integer $j$ satisfying $$L_0+\cdots+L_{j-1}\leq \ell< L_0+\cdots+L_j.$$
 Here we take the convention  $L_0=0$.
 Clearly,
 \begin{equation}
 \label{e-v1}
 0\leq \theta(\ell+1)-\theta(\ell)\leq 1,\quad \lim_{\ell\to \infty}\theta(\ell)=\infty,\quad\lim_{\ell\to \infty}\frac{\theta(\ell+1)}{
 \theta(\ell)}=1.
 \end{equation}
 Moreover,
  \begin{equation}
 \label{e-v2}
 \lim_{\ell\to \infty}\frac{n_1+\cdots+n_{\ell-1}}{n_1+\cdots+ n_\ell}=\lim_{\ell\to \infty}\frac{n_1+\cdots+n_{\ell-1}}{(n_1+\cdots+ n_{\ell-1})(1+1/\theta(\ell))}=1.
 \end{equation}
 Combining \eqref{e-v1}, \eqref{e-v2} and \eqref{e-a3}, we have
    \begin{equation}
    \label{e-a4}
     \lim_{\ell\to \infty}\frac{n_\ell}{n_1+\cdots+n_{\ell-1}}=0,\qquad \lim_{\ell\to \infty}\frac{n_{\ell}}{n_{\ell-1}}=\lim_{\ell\to \infty}\frac{(n_1+\cdots+n_{\ell-1})/\theta(\ell)}{(n_1+\cdots+ n_{\ell-2})/\theta(\ell-1)}=1.
    \end{equation}

By \eqref{e-a1}, we have
\begin{equation}\label{e-w1}
n_1=L_1\geq f_0(q_1,k_1)+ a(q_1,k_1)\log \frac{\mu(B_1(x_0))}{\mu(B_{1/2}(x_0))}.
\end{equation}
We claim that for any $\ell\geq 1$,
\begin{equation}\label{e-w2}
n_{\ell+1}\geq f_{n_1+\cdots +n_{\ell}}(q_{\theta(\ell+1)}+ k_{\theta(\ell+1)})+ a(q_{\theta(\ell+1)},k_{\theta(\ell+1)})\log f_{n_1+\cdots+ n_\ell} (q_{\theta(\ell)}, k_{\theta(\ell)}).
\end{equation}
To prove \eqref{e-w2}, fix $\ell$ and set $j=\theta(\ell+1)$.  First we consider the case that $j=1$. In this case, by \eqref{e-a3},
$n_{\ell+1}\geq n_{1}+\cdots+n_{\ell}$. Note that in this case $\theta(\ell)=1$,  hence \eqref{e-w2} follows from \eqref{e-a1'}.
Next we assume $j\geq 2$. Then $\theta(\ell)=j$ or $j-1$. By the definition of $\theta$, $$L_{j-1}\leq \ell+1\leq n_1+\cdots +n_{\ell}.$$
Since $n_{\ell+1}\geq (n_1+\cdots +n_{\ell})/j$, \eqref{e-w2} follows from \eqref{e-a2}.

Denote $\lambda_j=\tau^\prime(q_j)q_j-\tau(q_j)-b_j$ for $j\in \N$. Then by \eqref{e-v3}-\eqref{e-v4}, we have
\begin{equation}
\label{e-w10}
\liminf_{j\to \infty} \lambda_j\geq \tau^*(\alpha).
\end{equation}
Define a sequence $(N_\ell)_{\ell=1}^\infty$ by
$$
N_\ell=\max\left\{1,\; \left[2^{n_\ell\lambda_{\theta(\ell)}}\right]\right\},
$$
where $[x]$ denotes the integer part of $x$.

 Let $D=\bigcup_{\ell\geq 0}D_\ell$ with
$D_0=\{\emptyset\}$ and $D_\ell=\{\omega = (i_1i_2\cdots i_\ell): 1\leq
i_j\leq N_j, 1\leq j\leq \ell\}$.  We will construct a collection ${\mathcal G}=\{B_\omega:
\omega\in D\}$ of  closed balls of radius $r_\omega$  in ${\R}^d$ recursively, which has Moran structure and satisfies the following properties:
\begin{itemize}
\item[(p1)] $B_{\emptyset}=B_1(x_0)$;
\item[(p2)] $r_\omega=2^{-(n_1+\cdots+n_{\ell})}$ for each $\omega\in D_\ell$;
\item[(p3)] For each $\ell\geq 1$, $\omega\in D_{\ell-1}$ and $1\leq i\leq N_{\ell}$,
\begin{equation*}
2^{-n_{\ell}(\tau^\prime(q_{\theta(\ell)})+1/k_{\theta(\ell)})}\leq \frac{\mu(B_{\omega i})}{\mu(B_\omega)}\leq 2^{-n_{\ell}(\tau^\prime(q_{\theta(\ell)})-1/k_{\theta(\ell)})}
\end{equation*}
and
$$\mu(2B_{\omega i})/\mu(\frac12 B_{\omega i})\leq f_{n_1+\cdots+n_{\ell}}(q_{\theta(\ell)}, k_{\theta(\ell)})\leq n_1+\cdots+n_{\ell},$$
here and afterwards,  $cB$ denotes $B_{cr}(x)$ when $B=B_{r}(x)$.
\end{itemize}

The construction is done by induction. We first set $B_\emptyset=B_1(x_0)$. Since $\mu$ has an asymptotical multifractal structure, by  \eqref{e-w1} and Def.~\ref{de-2.1},
there exist $N_1$ disjoint closed balls $\{B_i\}_{i=1}^{N_1}$ of radius $2^{-n_1}$, contained in $B_\emptyset$, such that
$$
2^{-n_1(\tau^\prime(q_1)+{1}/{k_{1}})}\leq \frac{\mu(B_i)}{\mu(B_\emptyset)}\leq 2^{-n_1(\tau^\prime(q_1)-1/k_1)}
$$
and $$\frac{\mu(2B_i)}{\mu(\frac12 B_i)}\leq f_{n_1}(q_1, k_1)\leq n_1.$$
   Relabel this family of  $N_1$ balls by $\{B_\omega: \omega\in D_1\}$. Then (p3) holds in the case $\ell=1$ (noting that $\theta(1)=1$).

Assume we have constructed well the family of disjoint balls $\{B_\omega: \omega\in D_\ell\}$ for some $\ell\geq 1$ so that
each ball in this family has radius $2^{-n_1-\cdots-n_\ell}$, and (p3) holds for $\ell$.   Next we construct $\{B_{\omega'}: \omega'\in D_{\ell+1}\}$.
Fix $\omega\in D_\ell$.  Since (p3) holds for $\ell$, we have
$$
\mu(B_\omega)/\mu\big(\frac{1}{2}B_\omega\big)\leq f_{n_1+\cdots+n_\ell}(q_{\theta(\ell)}, k_{\theta(\ell)}).
$$
Combining the above inequality with \eqref{e-w2} yields
$$
n_{\ell+1}\geq f_{n_1+\cdots +n_{\ell}}(q_{\theta(\ell+1)}, k_{\theta(\ell+1)})+ a(q_{\theta(\ell+1)},k_{\theta(\ell+1)})\log \frac{\mu(B_\omega)}{\mu\big(\frac{1}{2}B_\omega\big)}.
$$
By Def.~\ref{de-2.1}, there exist $N_{\ell+1}$ disjoint balls of radius $2^{-n_1-\cdots-n_{\ell+1}}$, which we denote as
$B_{\omega i}$, $i=1,\ldots, N_{\ell+1}$, such that $B_{\omega i}\subset B_\omega$ and
$$
2^{-n_{\ell+1}(\tau^\prime(q_{\theta(\ell+1)})+1/k_{\theta(\ell+1)})}\leq \frac{\mu(B_{\omega i})}{\mu(B_\omega)}\leq 2^{-n_{\ell+1}(\tau^\prime(q_{\theta(\ell+1)})-1/k_{\theta(\ell+1)})}
$$
and $$\frac{\mu(2B_{\omega i})}{\mu(\frac12 B_{\omega i})}\leq f_{n_1+\cdots +n_{\ell+1}} (q_{\theta(\ell+1)}, k_{\theta(\ell+1)}).$$
Now letting $\omega$ vary over $D_\ell$, we get the family $\{B_{\omega i}:\; \omega\in D_\ell,\; 1\leq i\leq N_{\ell+1}\}:= \{B_{\omega'}: \omega'\in D_{\ell+1}\}$.
Clearly,  (p3) holds for $\ell+1$.

Hence by induction, we can construct well ${\mathcal G}:=\{B_\omega:\; \omega\in D\}$ which has the Moran structure and satisfies (p1)-(p3). Clearly, by (p3),  for each $\ell\geq 1$ and $\omega\in D_\ell$ we have
\begin{equation}
\label{e-w11}
\prod_{i=1}^\ell 2^{-n_i (\tau'(q_{\theta(i)})+1/k_{\theta(i)})} \leq \frac{\mu(B_\omega)}{\mu(B_\emptyset)}\leq \prod_{i=1}^\ell 2^{-n_i (\tau'(q_{\theta(i)})-1/k_{\theta(i)})}.
\end{equation}

Let $F=\bigcap_{\ell=1}^\infty\bigcup_{\omega\in D_\ell} B_\omega$ be the Moran set associated with ${\mathcal G}$. We can use Proposition \ref{pro-2.4} to determine the Hausdorff dimension of $F$. Indeed in our case, $c_\ell=2^{-n_\ell}$ and $M_\ell=2^{-n_1-\cdots-n_\ell}$, hence by \eqref{e-a4}, the assumption \eqref{e-2.7} fulfills.
Thus by Proposition \ref{pro-2.4} and \eqref{e-w10},
$$
\dim_HF=\liminf_{\ell\to \infty} \frac{\log (N_1\cdots N_\ell)}{\log (2^{n_1+\cdots+n_\ell})}\geq \liminf_{\ell\to \infty} \lambda_{\theta(\ell)}\geq \tau^*(\alpha).
$$

In the end of this step, we show that $F\subset E(\alpha)$ and hence  $\dim_HE(\alpha)\geq \dim_HF\geq  \tau^*(\alpha)$.  To see this, let $x\in F$. Let $r>0$ be a small number. Then there exists $\ell\geq 1$ such that
\begin{equation}\label{e-w13}
 2^{-n_1-\cdots-n_{\ell+1}}\leq r<2^{-n_1-\cdots-n_\ell}.
 \end{equation}
  Clearly, $B_r(x)$ contains a ball, say $B_{\omega'}$, for some $\omega'\in D_{\ell+2}$.  On the other hand,
$B_r(x)$ intersects at least one ball, say $B_\omega$, for some $\omega\in D_\ell$, which implies $B_r(x)\subseteq 2B_\omega$. Hence we have
\begin{equation}
\label{e-w12}
\mu(B_r(x))\geq \mu(B_{\omega'})\quad \mbox{and}\quad \mu(B_r(x))\leq \mu(2B_\omega)\leq (n_1+\cdots+n_\ell) \mu(B_\omega).
\end{equation}
Combining \eqref{e-w12} with \eqref{e-w11}, \eqref{e-w13} and  \eqref{e-a4} yields
$$
\lim_{r\to 0}\frac{\log \mu(B_r(x))}{\log r}=\lim_{i\to \infty}\tau'(q_{\theta(i)})+1/k_{\theta(i)}=\alpha.
$$
That is, $x\in E(\alpha)$. Hence we have $F\subset E(\alpha)$. This finishes the proof of Step 1.

\bigskip
{\em Step 2. If $\alpha=p\tau^\prime(q_1)+(1-p)\tau^\prime(q_2)$ for some $0<p<1$ and $q_1,q_2\in \Omega$,
then $E(\alpha)\neq \emptyset$ and $\dim_HE(\alpha)\geq p\tau^*(\alpha_1)+(1-p)\tau^*(\alpha_2)$, where
$\alpha_1:=\tau^\prime(q_1)$ and $\alpha_2=\tau^\prime(q_2)$.}

The proof of this step is quite similar to that in Step 1. We only list the main different points.

Fix $q_1,q_2\in \Omega$ and $0<p<1$. Since $\Lambda$ is dense in $\Omega$, there exist two sequences $(q_{1,j})_{j=1}^\infty$, $(q_{2,j})_{j=1}^\infty\subset \Lambda$
such that $\lim_{j\to \infty} q_{i,j}=q_i$, $i=1,2$. Since $\tau$ is concave, we have $\lim_{j\to \infty}\tau'(q_{i,j})=\tau'(q_i)=\alpha_i$, $i=1,2$.   By \eqref{e-1.0}, there exists a sequence of integers $(k_j)\uparrow \infty$ such that
$\lim_{j\to \infty} b(q_{i,j}, k_j)=0$.

By \eqref{e-1.0}, we can define a sequence $(L_j)_{j=0}^\infty$  of integers such that   $L_0=0$ and for $j\geq 1$,
$$
n\geq f_0(q_{i, 1},k_1) +a(q_{i,1},k_1)\log A_0\quad \mbox{ if } n\geq L_1, \; i=1,2,
$$
$$
n\geq f_n(q_{i,1},k_1) +a(q_{i, 1},k_1)\log f_n(q_{i,1}, k_1)\quad \mbox{ if } n\geq L_1, \; i=1,2,
$$
and
$$\frac{n}{j+1}\geq f_n(q_{i, j+1},k_{j+1})+ a(q_{i, j+1},k_{j+1})\log (f_n(q_{i, j+1},k_{j+1})+ f_n(q_{i, j},k_{j}))
$$
if $n\geq L_j$, $j=1,2, \ldots$, $i=1,2$.  Note that the sequence $(L_j)_{j=0}^\infty$ may be different from what we constructed in Step 1.

Construct $(n_\ell)_{\ell=1}^\infty$ from $(L_j)_{j=0}^\infty$ in the same way as in Step 1. Again, we use $\theta(\ell)$ denote the unique positive integer $j$ satisfying $\sum_{s=0}^{j-1}L_s\leq \ell<\sum_{s=0}^{j}L_s.$

For $\ell\geq 1$, set
\begin{equation}
\label{e-z1}
t_\ell=\left\{
\begin{array}{ll}
1 & \mbox{  if } \{\ell\sqrt{2}\}\in [0,p),\\
2 & \mbox{  if } \{\ell\sqrt{2}\}\in [p,1),\\
\end{array}
\right.
\end{equation}
where $\{x\}$ denotes the fractional part of $x$,  and define
$$
u_\ell=\tau^\prime(q_{t_\ell, \theta(\ell)})q_{t_\ell, \theta(\ell)}-\tau(q_{t_\ell, \theta(\ell)})-b(q_{t_\ell, \theta(\ell)}, k_{\theta(\ell)}).
$$
It is easy to check that
\begin{equation}
\label{e-z2}
\lim_{\ell\to \infty} (u_\ell-\tau^*(\alpha_{t_\ell}))=0.
\end{equation}
Then define a sequence $(N_\ell)_{\ell=1}^\infty$ by
\begin{equation*}
N_\ell=\max\left\{1,\; \left[2^{n_\ell u_\ell}\right]\right\},
\end{equation*}
here $[x]$ denotes the integer part of $x$.

Pick $x_0\in \R$ such that  $\mu(B_{1/2}(x_0))>0$. Let $D=\bigcup_{\ell\geq 0}D_\ell$ with
$D_0=\{\emptyset\}$ and $D_\ell=\{\omega = (i_1i_2\cdots i_\ell): 1\leq
i_j\leq N_j, 1\leq j\leq \ell\}$.  Similar to Step 1, we can  construct a collection ${\mathcal G}=\{B_\omega:
\omega\in D\}$ of  closed balls of radius $r_\omega$  in ${\R}^d$ recursively, which has Moran structure and satisfies the following properties:
\begin{itemize}
\item[(q1)] $B_{\emptyset}=B_1(x_0)$;
\item[(q2)] $r_\omega=2^{-(n_1+\cdots+n_{\ell})}$ for each $\omega\in D_\ell$;
\item[(q3)] For each $\ell\geq 1$, $\omega\in D_{\ell-1}$ and $1\leq i\leq N_{\ell}$,
$$
2^{-n_{\ell}(\tau^\prime(q_{t_\ell, \theta(\ell)})+1/k_{\theta(\ell)})}\leq \frac{\mu(B_{\omega i})}{\mu(B_\omega)}\leq 2^{-n_{\ell}(\tau^\prime(q_{t_\ell, \theta(\ell)})-1/k_{\theta(\ell)})}.
$$
and
$$\mu(2B_{\omega i})/\mu(\frac12 B_{\omega i})\leq f_{n_1+\cdots+n_{\ell}}(q_{t_\ell, \theta(\ell)}, k_{\theta(\ell)})\leq n_1+\cdots+n_{\ell}.$$
\end{itemize}

Let $F=\bigcap_{\ell=1}^\infty\bigcup_{\omega\in D_\ell} B_\omega$ be the Moran set associated with ${\mathcal G}$. Similar to Step 1, we can show that $F\subset E(\alpha)$ and $$\dim_HE(\alpha)\geq \dim_HF=\liminf_{\ell\to \infty} \frac{\log (N_1\cdots N_\ell)}{\log (2^{n_1+\cdots+n_\ell})}\geq p\tau^*(\alpha_1)+(1-p)\tau^*(\alpha_2).$$
This finishes the proof of Step 2.
\bigskip

{\em Step 3. $E(\alpha)\neq \emptyset$ if and only if  $\alpha\in [\alpha_{\min},\, \alpha_{\max}]\cap \R$. Furthermore, for any $\alpha\in [\alpha_{\min},\, \alpha_{\max}]\cap \R$, $\dim_HE(\alpha)=\tau^*(\alpha)=\inf\{\alpha q-\tau(q):\; q\in \R\}.$}

First we show that $E(\alpha)\neq \emptyset$ implies that $\alpha\in [\alpha_{\min},\, \alpha_{\max}]$. Indeed, assume that $\alpha=\lim_{r\to 0}\frac{\log \mu( B_r(x))}{\log r}$ for some $x\in \R$. Then $\Theta(q,r)\geq \mu(B_r(x))^q$ (cf. \eqref{e-tz1}),  which implies $\tau(q)\leq \alpha q$. Hence $\alpha\in [\alpha_{\min}, \alpha_{\max}]$.

Next we show that if $\alpha\in [\alpha_{\min}, \alpha_{\max}]\cap \R$, then $E(\alpha)\neq \emptyset$ and $\dim_HE(\alpha)\geq \tau^*(\alpha)$.
To see this, let $\alpha\in [\alpha_{\min}, \alpha_{\max}]\cap \R$. Since $\tau$ is concave, there are only two possible cases:  (1) $\alpha \in  \overline{\{\tau^\prime(q):\; q\in \Omega\}}$;   (2) $\alpha\in (\tau^\prime(q+), \tau^\prime(q-))$ for some $q\in \R$, here  $\tau^\prime(q+), \tau^\prime(q-)$ denote the right and left derivatives of $\tau$ at $q$, respectively. By Step 1, we only need to consider the second case.  Clearly, there exists $0<p<1$ such that
$$
\alpha=p\tau^\prime(q+)+(1-p) \tau^\prime(q-).
$$
Since $\tau$ is concave, there exist two sequences $(q_j)_{j=1}^\infty$, $(q'_j)_{j=1}^\infty\subset \Omega$ such that
$$
q_j\searrow q,\quad q'_j\nearrow q,\quad \tau'(q_j)\nearrow \tau'(q+), \quad \tau'(q'_j)\searrow \tau'(q-)
$$
as $j$ tends to infinity. Therefore, there exists a sequence $(p_j)_{j=1}^\infty\subset (0,1)$ such that $\lim_{j\to \infty}p_j=p$ and
$$
   \alpha=p_j\tau^\prime(q_j)+(1-p_j) \tau^\prime(q'_j).
$$
By Step 2, we have $E(\alpha)\neq \emptyset$ and
$$
\dim_HE(\alpha)\geq p_j (\tau^\prime(q_j)q_j-\tau(q_j))+(1-p_j) (\tau^\prime(q'_j) q'_j-\tau(q'_j)),\quad j\in \N.
 $$
Letting $j\to \infty$, we obtain
$$\dim_HE(\alpha)\geq (p \tau^\prime(q+)+(1-p) \tau^\prime(q-)) q-\tau(q)=\alpha q-\tau(q)=\tau^*(\alpha).$$

In the end, we point out that if $\alpha\in [\alpha_{\min}, \alpha_{\max}]\cap \R$, then $\dim_HE(\alpha)=\tau^*(\alpha)$. This follows from the basic fact that
$\dim_HE(\alpha)\leq \tau^*(\alpha)$ whenever $E(\alpha)\neq \emptyset$ (indeed,  this fact holds for any compactly supported probability measure; see, e.g., Theorem 4.1 in \cite{LaNg99}). This finishes the proof of Theorem \ref{thm-2.1}.
\end{proof}

\section{Self-conformal measures with the AWSC}
\label{S-3}

In this section we prove Theorem \ref{thm-1.3}. In Sect.~\ref{Sub-1}, we introduce some notation and definitions about self-conformal measures and the asymptotically weak separation condition. In Sect.~\ref{Sub-2}, we show that any self-conformal measure with the asymptotically weak separation condition has an asymptotically multifractal structure on $\R^+$; then Theorem \ref{thm-1.3} follows from Theorem \ref{thm-2.1}(b).

\subsection{Self-conformal measures and asymptotically weak separation condition}\label{Sub-1}
Let $U\subset \R^d$ be an open set. A $C^1$-map $S: U\to \R^d$ is {\it conformal} if the differential $S^\prime(x): \R^d\to \R^d$
satisfies $\left|S^\prime(x)y\right|=\left|S^\prime(x)\right|\cdot |y|\neq 0$
for all $x\in U$ and $y\in \R^d$, $y\neq 0$. Furthermore,  $S: U\to \R^d$ is {\it contracting} if
there exists $0<c<1$ such that $|S(x)-S(y)|\leq c\cdot |x-y|$ for all $x,y\in U$.
We say that $\{S_i: X\to
X\}_{i=1}^\ell $ is a {\it $C^1$-conformal iterated function system} ( $C^1$-conformal
IFS) on a compact set $X\subset \R^d$ if each $S_i$ extends to an injective
contracting $C^1$-conformal map $S_i: U\to U$ on an open set $U\supset X$.

Let $\{S_i\}_{i=1}^{\ell}$ be a $C^1$-conformal IFS on a compact set
$X\subset \R^d$.
It is well-known, see \cite{Hut81}, that there is a
unique non-empty compact set $K \subset X$ such that
$K=\bigcup_{i=1}^\ell S_i(K)$. Given a probability vector $(p_1,\ldots
, p_\ell) $, there is a unique Borel probability measure $\nu$
satisfying
\begin{equation}  \label{e-1.2}
\nu=\sum_{i=1}^\ell p_i \nu\circ S_i^{-1}.
\end{equation}
This measure is supported on $K$ and it is called {\it self-conformal}. In
particular, if the maps $S_i$ are all similitudes, then $\nu$ is called {\it %
self-similar}.

Let ${\mathcal A}=\{1,\ldots,\ell\}$. Denote ${\mathcal A}^*=\bigcup_{n\geq 1}{\mathcal A}^n$. For $%
u=u_1\ldots \circ u_k$, we write $S_u=S_{u_1}\circ \cdots S_{u_k}$, $p_u=p_{u_1}\cdots p_{u_k}$ and $K_u=S_u(K)$%
; in particular we let $\tilde{u}$ denote the word obtained by dropping the
last letter of $u$. For $n\in {\Bbb N}$, denote
\begin{equation}  \label{e-a.1}
W_n:=\left\{u\in {\mathcal A}^*:\; \mbox{\rm diam}(K_u)\leq 2^{-n}, \mbox{\rm diam}(K_{%
\tilde{u}})> 2^{-n}\right\}.
\end{equation}

For $n\geq 0$, let
\begin{equation}
\label{e-dn}
{\bf D}_{n}=\{[0, 2^{-n})^d+{\bf v}:\; {\bf v}\in 2^{-n}\Z^d\},
\end{equation}
 and define
$$
\tau_n(q)=\sum_{Q\in {\bf D}_{n}}\nu(Q)^q.
$$

\begin{pro}
\label{pro-3.1}
There is a sequence $(c_n)_{n=1}^\infty$ of positive numbers with $$\lim_{n\to \infty}\frac{1}{n}\log c_n=0,$$
 such that for any $q>0$, $ n,m\in \N$, and all $u\in W_n$,
\begin{equation}
\label{e-3.1}
(c_n)^{-(q+1)}\tau_m(q)\leq \sum_{Q\in {\bf D}_{m+n}}(\nu(S_u^{-1}Q))^q\leq   (c_n)^{q+1}\tau_m(q).
\end{equation}
Furthermore, the limit $\lim_{m\to \infty} \frac{\log \tau_m(q)}{-m\log 2}$ exists for each $q>0$ and it coincides with $\tau(q):=\tau_\nu(q)$ defined as in Sec~\ref{S-1}.
\end{pro}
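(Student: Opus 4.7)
The plan is to prove \eqref{e-3.1} as a single-word transfer estimate requiring only bounded distortion (no AWSC), and then to combine it with the iterated self-conformal identity and the AWSC to extract quasi-multiplicativity of $(\tau_m(q))_m$, yielding both the existence of the limit and its identification with $\tau_\nu(q)$.

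First I would invoke the standard Koebe-type distortion lemma for $C^1$-conformal contracting IFSs: on a fixed open neighborhood $U \supset K$ there is a universal constant $D \geq 1$ such that $|S_u'(x)|/|S_u'(y)| \leq D$ for every $u \in \A^*$ and all $x, y \in U$. Combined with $\mathrm{diam}(K_u) \leq 2^{-n}$, $\mathrm{diam}(K_{\tilde u}) > 2^{-n}$, and $\min_i |S_i'| > 0$, this forces $|S_u'| \asymp 2^{-n}$ uniformly for $u \in W_n$. In particular, for any $Q \in \mathbf{D}_{m+n}$ the set $S_u^{-1}(Q)$ has diameter at most $C\cdot 2^{-m}$ and so meets at most $C_0$ cubes of $\mathbf{D}_m$; symmetrically, each $Q' \in \mathbf{D}_m$ gives a set $S_u(Q')$ meeting at most $C_0$ cubes of $\mathbf{D}_{m+n}$. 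Applying the elementary power-sum inequalities
\begin{equation*}
(a_1+\cdots+a_k)^q \leq k^{\max(q-1,0)}\sum_{i=1}^k a_i^q, \qquad \sum_{i=1}^k a_i^q \leq k^{\max(1-q,0)}(a_1+\cdots+a_k)^q \quad (q>0,\, a_i\ge 0),
\end{equation*}
with $k \leq C_0$ yields both directions of \eqref{e-3.1} with $c_n$ bounded by a constant independent of $n$, so $(\log c_n)/n \to 0$ trivially.

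For the \emph{furthermore} part, iterate the self-conformal equation to its length-$n$ form $\nu = \sum_{u \in W_n} p_u\, \nu \circ S_u^{-1}$. Fix $Q \in \mathbf{D}_{m+n}$; only those $u \in W_n$ with $K_u \cap Q \neq \emptyset$ contribute, and the AWSC bounds their number by a quantity $t_n$ with $(\log t_n)/n \to 0$. Applying the appropriate power-sum inequality above (direction dictated by whether $q \geq 1$ or $0 < q \leq 1$), summing over $Q$, and substituting \eqref{e-3.1}, I would obtain
\begin{equation*}
A_n^{-1}\,\tau_m(q)\,\gamma_n(q) \,\leq\, \tau_{m+n}(q) \,\leq\, A_n \, \tau_m(q) \, \gamma_n(q), \qquad \gamma_n(q) := \sum_{u \in W_n} p_u^q,
\end{equation*}
with $\log A_n = o(n)$. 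Taking $m=0$ yields $\gamma_n(q) \asymp \tau_n(q)$ up to a subexponential factor, since $\tau_0(q)$ is a fixed positive constant depending only on $\mathrm{supp}(\nu)$; hence the genuine quasi-multiplicativity $A_n^{-1}\tau_m(q)\tau_n(q) \leq \tau_{m+n}(q) \leq A_n\tau_m(q)\tau_n(q)$ holds with $\log A_n = o(n)$. A standard Fekete-type argument applied to $\log \tau_m(q)$ then produces the limit $\tau(q):=\lim_m (\log\tau_m(q))/(-m\log 2)$. Its identification with the original $\tau_\nu(q)$ from Section~\ref{S-1} follows from the classical equivalence, valid for $q > 0$, between the dyadic partition sum $\tau_m(q)$ and the packing quantity $\Theta_\nu(q, 2^{-m})$ of \eqref{e-tz1}, since each ball of radius $2^{-m}$ meets only $O_d(1)$ cubes of $\mathbf{D}_m$ and conversely.

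The main obstacle is the quasi-multiplicativity step: without the AWSC, the overlap count $t_n$ could grow exponentially in $n$, producing a factor $e^{O(n)}$ in $A_n$ that would wreck the Fekete argument and preclude the existence of the limit. The AWSC is precisely the structural assumption that ensures this overlap cost is absorbed into the $o(n)$ subexponential error, and the whole scheme goes through.
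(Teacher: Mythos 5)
Your first step asserts a universal Koebe distortion constant $D$ with $|S_u'(x)|/|S_u'(y)|\le D$ for all $u\in\A^*$, and deduces from it that $c_n$ can be taken constant, so that $(\log c_n)/n\to 0$ ``trivially.'' This is where the argument breaks. The paper works with $C^1$-conformal IFSs, not $C^{1+\alpha}$ ones, and for merely $C^1$ maps the bounded-distortion lemma fails: the telescoping estimate for $\log\bigl(|S_u'(x)|/|S_u'(y)|\bigr)$ accumulates $n$ terms, each of which can only be made small (not summable), giving a bound of the form $C(\epsilon)(1+\epsilon)^{\beta n}$ for every $\epsilon>0$ rather than a uniform constant. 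This is exactly the content of \cite[Prop.\ 3.3]{Fen07} that the paper cites, and it is the whole reason the statement involves a \emph{sequence} $(c_n)$ with $(\log c_n)/n\to 0$ instead of a single constant. Indeed the paper's remark immediately after the proposition says precisely this: under the stronger bounded-distortion hypothesis (Peres--Solomyak \cite{PeSo00}), one may replace $(c_n)$ by a constant $c$ --- but that hypothesis is not assumed here. Since your diameter estimate $\mathrm{diam}(S_u^{-1}Q)\le C\cdot 2^{-m}$ also relies on two-sided control of $|S_u'|$ via distortion, the gap infects the core of the comparison argument, not just a cosmetic constant.

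Aside from that, the broad plan is sensible and closer to a self-contained proof than what the paper actually gives (which is a bare citation of \cite[Props.\ 3.3 and 4.3]{Fen07}): you correctly identify that \eqref{e-3.1} is a single-word transfer estimate needing no separation assumption, while the existence of $\lim_m(\log\tau_m(q))/(-m\log 2)$ needs the AWSC to control overlap multiplicity in a Fekete-type argument. Two smaller points if you push that second part through: the iterated identity should be taken over equivalence classes, $\nu=\sum_{[u]\in W_n/\sim}\overline{p}_{[u]}\,\nu\circ S_{[u]}^{-1}$, because the AWSC bounds $\#\{S_u:\;u\in W_n,\ K_u\cap Q\neq\emptyset\}$ rather than $\#\{u\in W_n:\;K_u\cap Q\neq\emptyset\}$; and the weight that appears in the quasi-multiplicative estimate is consequently $\sum_{[u]}\overline{p}_{[u]}^{\,q}$, not $\sum_u p_u^q$.
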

\begin{proof}
It was proved in \cite[Proposition 3.3]{Fen07} that  there exists $\beta>0$ such that for any $\epsilon>0$, there exists $C(\epsilon)>0$ such that for all  $q>0$,  $m,n\in \N$, and all $u\in W_n$,
\begin{equation}
\label{e-3.2}
\left(C(\epsilon)(1+\epsilon)^{\beta n}\right)^{-(q+1)} \tau_m(q)\leq \sum_{Q\in \D_{m+n}}(\nu(S_u^{-1}Q))^q\leq \left(C(\epsilon)(1+\epsilon)^{\beta n}\right)^{q+1} \tau_m(q).
\end{equation}
Choose a sequence of positive numbers $(\epsilon_n)$ tending to $0$ slowly enough such that $\lim_{n\to \infty} (1/n) \log C(\epsilon_n)=0$.
Let $c_n=C(\epsilon_n) (1+\epsilon_n)^{\beta n}$. Then $\lim_{n\to \infty}(\log c_n)/n=0$, and \eqref{e-3.1} follows from \eqref{e-3.2}.
The existence of  $\lim_{m\to \infty} \frac{\log \tau_m(q)}{-m\log 2}$ for each $q>0$ was proved in \cite[Proposition 4.3]{Fen07}.
It is easy to check that the limit coincides with $\tau_\nu(q)$.
\end{proof}

We remark that Proposition \ref{pro-3.1} was first proved by Peres and Solomyak \cite{PeSo00} under the bounded distortion assumption on $\{S_i\}_{i=1}^\ell$. In that case, the involved $(c_n)$ in  \eqref{e-3.1} can be  replaced by a constant $c$.

The following definition was introduced in \cite{Fen07}.
\begin{de}
\label{de-3.1}
{\rm
The IFS $\{S_i\}_{i=1}^\ell$ is said to satisfy the
{\it asymptotically weak separation condition} (AWSC) if there exists a sequence $%
(t_n)$ of natural numbers such that
\begin{equation*}
\lim_{n\to \infty}\frac{1}{n}\log t_n=0
\end{equation*}
and for each $n\in {\Bbb N}$ and $\widetilde{Q}\in {\bf D}_n$ (see \eqref{e-dn} for the definition of ${\bf D}_n$),
\begin{equation}  \label{e-5.2}
\#\{S_u:\; u\in W_n, K_u\cap \widetilde{Q}\neq \emptyset\}\leq t_n.
\end{equation}
}
\end{de}
For instance, when $\beta>1$ is a Salem number, then an IFS $\{S_i\}_{i=1}^\ell$ on $\R$ satisfies the AWSC if each $S_i$ has the form
$$S_i(x)=\pm\beta^{-m_i}x+d_i,$$
where $m_i\in \N$ and $d_i\in \Z[\beta]$, here $\Z[\beta]$ denotes the integral ring generated by $\beta$.  For a proof, see  \cite[Proposition 5.3, Remark 5.5]{Fen07}.

\begin{rem}
\label{rem-LN}
{\rm The AWSC is strictly weaker than the WSC introduced in \cite{LaNg99}. To see it, for  $\beta\in (1,2)$ and  $m\in \N$,  set
$$Y^{\beta,m}:=\left\{\sum_{i=0}^n\epsilon_i\beta^i:\; n\in \N, \epsilon_i \in \{0,\pm 1,\ldots,\pm m\} \mbox{ for }0\leq i\leq n\right\}.$$
Erd\"{o}s and Komornik \cite{ErKo98} proved that if $\beta$  is not a Pisot number and $m\ge \beta-\beta^{-1}$, then $Y^{\beta,m}$ contains accumulation points. This implies that the IFS $\{\lambda x, \lambda x+1\}$ does not satisfies the WSC when $\lambda^{-1}\in (1, (\sqrt{5}+1)/2)$ is not a  Pisot number. However this IFS  satisfies the AWSC when $\lambda^{-1}$ is a Salem number; and there do exist infinitely many Salem numbers in $(1, (\sqrt{5}+1)/2)$ (see,  e.g.,  \cite{Bor02}).
}

\end{rem}
\subsection{Asymptotically good multifractal structure}
\label{Sub-2}
In this subsection, we assume that  $\{S_i\}_{i=1}^{\ell}$ is a $C^1$-conformal IFS on a compact set
$X\subset \R^d$ which satisfies the AWSC. Let $\nu$ be a self-conformal measure associated with $\{S_i\}_{i=1}^{\ell}$ and a probability vector $(p_1,\ldots, p_\ell)$. The main result of this subsection is the following.
\begin{thm}
\label{thm-3.1}
The measure $\nu$ has an asymptotically good multifractal structure over $\R_+$.
\end{thm}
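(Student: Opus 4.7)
I would verify Definition~\ref{de-2.1} for $\nu$ over $\R_+$ with the dense set $\Lambda = \Omega_+$ itself; Theorem~\ref{thm-1.3} then follows directly from Theorem~\ref{thm-2.1}(b). The argument splits naturally into three components: a \emph{global} Legendre-transform count of good dyadic cubes at scale $2^{-m}$, a \emph{local transfer} of this count into the ball $B_{2^{-n}}(x)$ via the AWSC and Proposition~\ref{pro-3.1}, and a final doubling selection.

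\noindent\textbf{Global count of good cubes.} Fix $q\in \Omega_+$ and let $\alpha = \tau'(q)$. Proposition~\ref{pro-3.1} gives $\tau_m(q) = 2^{-m(\tau(q) + o(1))}$. For the ``right tail'' of cubes with $\nu(Q) \geq 2^{-m(\alpha - 1/k)}$, comparing their contribution to $\tau_m(q - \delta)$ (which by concavity and differentiability of $\tau$ at $q$ is at most $2^{-m(\tau(q) + \delta \alpha + o(1))}$ for small $\delta > 0$) bounds their number by $2^{m(\alpha q - \tau(q) + O(\delta /k))}$, and symmetrically for the ``left tail'' via $\tau_m(q + \delta)$. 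Consequently the cubes $Q \in \D_m$ with $\nu(Q) \in [2^{-m(\alpha + 1/k)}, 2^{-m(\alpha - 1/k)}]$ number at least $2^{m(\alpha q - \tau(q) - b(q,k))}$ for large $m$, with $b(q,k)\to 0$ as $k \to \infty$. Applying the same Chebyshev argument simultaneously at scales $m-1$ and $m+1$ (whose partition functions are comparable to $\tau_m(q)$ up to factors $2^{\pm|\tau(q)|}$) and discarding the sub-exponentially few cubes whose dyadic neighbors are untypical would leave a subfamily of comparable size on which the doubling ratio $\nu(2Q)/\nu(\tfrac{1}{2}Q)$ is bounded by a polynomial in $m$.

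\noindent\textbf{Local transfer.} Given $n$ and $x$ with $\nu(B_{2^{-n-1}}(x)) > 0$, the AWSC bounds by $O(t_n)$ the number of distinct maps $S_u$, $u \in W_n$, whose attractor pieces $K_u$ meet $B_{2^{-n-1}}(x)$. Since the total masses of these pieces account for $\nu(B_{2^{-n-1}}(x))$, some $u^\ast \in W_n$ must satisfy $p_{u^\ast}\nu(S_{u^\ast}^{-1}B_{2^{-n}}(x)) \geq \nu(B_{2^{-n-1}}(x))/(Ct_n)$. Pushing the global good cubes from the previous step forward by $S_{u^\ast}$ produces disjoint balls of radius $\asymp 2^{-n-m}$ inside $S_{u^\ast}(K) \subset B_{2^{-n}}(x)$ (since $\mathrm{diam}(K_{u^\ast}) \leq 2^{-n}$); Proposition~\ref{pro-3.1} then translates the $\nu\circ S_{u^\ast}^{-1}$-masses of these balls into $\nu$-masses up to a factor $c_n^{q+1}$. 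Normalizing by $\nu(B_{2^{-n}}(x))$ and absorbing the sub-exponential losses $c_n^{q+1}$, $t_n$, together with the ratio $\nu(B_{2^{-n}}(x))/\nu(B_{2^{-n-1}}(x))$, into an appropriate lower bound on $m$ yields the constants $a(q,k)$ and the function $f_n(q,k)$ of Definition~\ref{de-2.1}; condition \eqref{e-1.0} holds because both $c_n$ and $t_n$ are sub-exponential.

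\noindent\textbf{Main obstacle.} The most delicate bookkeeping is the doubling condition on the selected balls, which must survive both the conformal distortion of $S_{u^\ast}$ and the passage from ambient $\D_m$-cubes to scale-$2^{-n-m}$ balls centered inside $B_{2^{-n}}(x)$. I would handle this by combining (a) the three-scale Chebyshev selection from the global step, (b) the comparability \eqref{e-3.1} applied at scales $n+m \pm 1$, and (c) the fact that a $C^1$-conformal $S_{u^\ast}$ sends a scale-$2^{-m}$ dyadic cube to an approximate scale-$2^{-n-m}$ ball, so doubling is preserved up to a bounded multiplicative factor. The resulting doubling bound has order $c_{n+m}^{q+1}\,\mathrm{poly}(m)$, which is sub-exponential in $n+m$ and therefore admissible as $f_{n+m}(q,k)$. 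With Definition~\ref{de-2.1} verified, Theorem~\ref{thm-2.1}(b) closes the proof.
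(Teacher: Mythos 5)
The key gap is in your ``local transfer'' step. You push the globally-selected good cubes $Q\in{\bf D}_m$ forward by a single $S_{u^*}$ and invoke Proposition~\ref{pro-3.1} to ``translate'' masses, but Proposition~\ref{pro-3.1} is a statement about the partition functions $\sum_Q(\nu(S_u^{-1}Q))^q$, not about individual cube masses; more importantly, the honest $\nu$-measure of an image set inside $B_{2^{-n}}(x)$ is not controlled by $p_{u^*}\nu(Q)$. From the iterated self-similarity \eqref{e-nu-new} one has $\nu(E)=\sum_{[u]}\overline{p}_{[u]}\,\nu\bigl(S_{[u]}^{-1}E\bigr)$, so $\nu(S_{u^*}(Q))$ picks up contributions from every overlapping piece $[u]$ whose image meets $S_{u^*}(Q)$, of which $\overline{p}_{[u^*]}\nu(Q)$ is just one summand. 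Taking $u^*$ maximal gives the needed \emph{lower} bound but no \emph{upper} bound: the other branches, though few in number by AWSC, could concentrate almost all of $\nu(B_{2^{-n}}(x))$ onto one of your ``good'' cubes and wreck the requirement $\nu(B_{2^{-n-m}}(y_i))/\nu(B_{2^{-n}}(x))\le 2^{-m(\alpha-1/k)}$. This overlap control is precisely the crux of the theorem, and as written your proposal omits it.

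The paper's proof is engineered to avoid pushing a global selection forward at all. It works with the \emph{local} partition function $\tau_{n'+m'}(B,q)=\sum_{Q\in{\bf D}_{n'+m'},\,Q\subset B}\nu(Q)^q$ at the combined scale and proves two-sided estimates on concentric balls $B_{3r/4}(x)\subset B_{r'}(x)\subset B_{7r/8}(x)$: the lower bound \eqref{e-tt} comes from the dominant branch $[u_0]$ (the pigeonhole you describe), while the upper bound \eqref{e-tt1} comes from the auxiliary measure $\eta=\sum_{[u]\in\Gamma}\overline{p}_{[u]}\,\nu\circ S_{[u]}^{-1}$, which literally \emph{equals} $\nu$ on $B_{7r/8}(x)$, with $\#\Gamma\le\tilde{t}_{n'}$ by AWSC and the $q$-th power handled by the super-multiplicativity inequality \eqref{e-z4}. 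Because these two bounds match up to factors sub-exponential in $n$, the Chebyshev exclusion of atypical cubes $\mathcal{F}$, $\mathcal{F}'$, $\mathcal{F}''$ is run directly on the local partition function, so the surviving cubes carry genuine $\nu$-masses in the required window. To rescue your ``push a global selection forward'' route you would need a separate mechanism showing that the non-maximal branches do not corrupt the pushed-forward cubes; the $\eta$-comparison is exactly such a mechanism, and without it the upper bound fails.
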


To prove the above theorem, we need a simple lemma.
\begin{lem}
\label{lem-3.1}
Let $q>0$ so that $\tau'(q)$ exists and let $k\in \N$. Then there exist positive numbers $\epsilon, \delta, \gamma$  and $M$ (all depend on $q, k$)  with $\epsilon<\min\{1,q\}$, $\delta=\min\{1/(4k), 1/(4kq)\}$, and $\gamma<1/(4k)$, such that  for any $m\geq M$,
\begin{equation}
\label{e-3.1O}
\tau_m(q)\geq 2^{-m(\tau(q)+\gamma)},
\end{equation}
\begin{equation}
\label{e-3.1a}
\tau_m(q+\epsilon)~2^{m(\tau'(q)-\delta)\epsilon}\leq \tau_m(q)~2^{-m\gamma}
\end{equation}
and
\begin{equation}
\label{e-3.1b}
\tau_m(q-\epsilon)~2^{-m(\tau'(q)+\delta)\epsilon}\leq \tau_m(q)~2^{-m\gamma}.
\end{equation}
\end{lem}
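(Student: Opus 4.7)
The plan is to derive the three inequalities from two ingredients: (i) the convergence $(-\log \tau_m(p))/(m\log 2) \to \tau(p)$ given by Proposition~\ref{pro-3.1} for every $p>0$, and (ii) the differentiability of the concave function $\tau$ at $q$, which, together with concavity, gives for any $\eta > 0$ and all sufficiently small $\epsilon > 0$ the chain
\begin{equation*}
(\tau'(q) - \eta)\epsilon \leq \tau(q+\epsilon) - \tau(q) \leq \tau'(q)\epsilon \leq \tau(q) - \tau(q-\epsilon) \leq (\tau'(q) + \eta)\epsilon.
\end{equation*}
(The outer inequalities come from differentiability at $q$; the inner two come from the monotonicity of the chord slopes of a concave function.)

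I would choose the constants in the order $\delta \to \eta \to \epsilon \to \gamma \to M$. First, $\delta = \min\{1/(4k), 1/(4kq)\}$ is forced by the statement. Second, pick any $\eta \in (0, \delta)$, say $\eta = \delta/2$, and then choose $\epsilon \in (0, \min\{1, q\})$ small enough that the two-sided bounds displayed above hold. Third, choose $\gamma \in (0, \min\{(\delta - \eta)\epsilon, 1/(4k)\})$; this is the ``slack parameter'' that will absorb the remaining error between $\tau_m$ and its limit.

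With these choices each of \eqref{e-3.1O}, \eqref{e-3.1a}, \eqref{e-3.1b} reduces, after taking $\log_2$ and dividing by $m$, to a linear comparison between $-\log_2 \tau_m(\cdot)/m$ at one or two of the points $q-\epsilon, q, q+\epsilon$ whose $m \to \infty$ limit is satisfied with strictly positive slack. For example, \eqref{e-3.1a} rewrites as
\begin{equation*}
-\frac{\log \tau_m(q+\epsilon)}{m\log 2} + \frac{\log \tau_m(q)}{m\log 2} \geq (\tau'(q) - \delta)\epsilon + \gamma,
\end{equation*}
whose left-hand side tends to $\tau(q+\epsilon) - \tau(q)$, and by the chain above this limit exceeds the right-hand side by at least $(\delta - \eta)\epsilon - \gamma > 0$. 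Inequality \eqref{e-3.1b} is handled symmetrically via the bound $\tau(q) - \tau(q-\epsilon) \leq (\tau'(q)+\eta)\epsilon$, and \eqref{e-3.1O} is even more direct since it only involves a single point. Applying Proposition~\ref{pro-3.1} to each of $p \in \{q-\epsilon, q, q+\epsilon\}$ (all strictly positive because $\epsilon < q$) produces a single threshold $M$ beyond which all three inequalities hold simultaneously.

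The only real ``obstacle'' is bookkeeping: one must juggle the parameters so that the slack $(\delta - \eta)\epsilon - \gamma$ stays strictly positive while also respecting the ceilings $\gamma < 1/(4k)$ and $\delta = \min\{1/(4k), 1/(4kq)\}$ prescribed by the statement. No deeper input beyond Proposition~\ref{pro-3.1} and the existence of $\tau'(q)$ is required.
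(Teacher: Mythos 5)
Your proposal is correct and follows essentially the same line as the paper's proof: fix $\delta$, use differentiability of the concave function $\tau$ at $q$ to choose $\epsilon$ so that the one-sided slopes $(\tau(q\pm\epsilon)-\tau(q))/(\pm\epsilon)$ are within $\eta<\delta$ of $\tau'(q)$, pick $\gamma$ with enough slack, and then invoke Proposition~\ref{pro-3.1} (the existence of $\lim_m -\log\tau_m(p)/(m\log 2)=\tau(p)$) at the three points $q-\epsilon,q,q+\epsilon$ to get a uniform threshold $M$. The only cosmetic difference is that the paper runs an explicit algebraic chain, using $\gamma$ simultaneously as the tolerance in the two-sided bound $2^{-m(\tau(u)+\gamma)}\leq\tau_m(u)\leq 2^{-m(\tau(u)-\gamma)}$ and as the slack term on the right-hand side, which forces the slightly tighter choice $\gamma\leq\epsilon\delta/8$ so that $\delta\epsilon/2-2\gamma\geq\gamma$; you instead decouple the approximation tolerance (absorbed into the choice of $M$) from the slack $\gamma$, so your weaker requirement $\gamma<(\delta-\eta)\epsilon$ suffices. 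Both routes are sound and of the same depth.
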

\begin{proof}
Set $\delta=\min\{1/(4k), 1/(4kq)\}$. Since $\alpha=\tau'(q)$ exists, we can pick $0<\epsilon<\min\{1,q\}$ so that
$$
(\alpha-\delta/2)\epsilon\leq |\tau(q\pm \epsilon)-\tau(q)|\leq (\alpha+\delta/2)\epsilon.
$$
Set $\gamma=\min\{\epsilon\delta/8, 1/(4k)\}$. Since $\tau(u)=\lim_{n\to\infty}\tau_n(u)$ for each $u>0$, we take $M$ large enough such that for $m\geq M$,
$$
2^{-m(\tau(u)+\gamma)}\leq \tau_m(u)\leq 2^{-m(\tau(u)-\gamma)}\quad \mbox{for $u=q, \;q-\epsilon,\; q+\epsilon$}.
$$
Then we have
\begin{eqnarray*}
\tau_m(q+\epsilon)2^{m(\alpha-\delta)\epsilon} & \leq & 2^{-m(\tau(q+\epsilon)-\gamma)} 2^{m(\alpha-\delta)\epsilon}\\
&\leq &2^{-m(\tau(q)+\gamma)} 2^{-m(\tau(q+\epsilon)-\tau(q))} 2^{m((\alpha-\delta)\epsilon+2\gamma)}\\
&\leq &\tau_m(q)2^{-m(\alpha-\delta/2)\epsilon} 2^{m((\alpha-\delta)\epsilon+2\gamma)}\\
&\leq &\tau_m(q)2^{-m(\delta\epsilon/2-2\gamma)}\leq \tau_m(q)2^{-m\gamma},
\end{eqnarray*}
which proves (\ref{e-3.1a}). The proof of (\ref{e-3.1b}) is essentially identical.
\end{proof}

The following lemma is obvious.
\begin{lem}
\label{lem-2.2} Let $q>0$. For any $n\in {\Bbb N}$ and non-negative
numbers $x_{1},\ldots, x_{n}$,
\begin{equation}
\frac{1}{n}(x_{1}^{q}+\cdots +x_{n}^{q})\leq (x_{1}+\cdots +x_{n})^{q}\leq
n^{q}(x_{1}^{q}+\cdots +x_{n}^{q}).  \label{e-z4}
\end{equation}
\end{lem}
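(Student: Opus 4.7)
The plan is to establish both halves of \eqref{e-z4} by completely elementary monotonicity arguments, reflecting the authors' assertion that the lemma is obvious. Setting $S := x_1 + \cdots + x_n$, I would use only the fact that for $q > 0$ the map $t \mapsto t^q$ is nondecreasing on $[0, \infty)$, together with the pointwise comparison $x_i \leq S$ that is immediate from nonnegativity of the $x_j$'s.

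For the lower bound, I would observe that since each $x_i \ge 0$, we have $x_i \leq S$ and hence $x_i^q \leq S^q$ for every $i$. Summing over $i = 1, \ldots, n$ and dividing by $n$ yields at once
$$
\frac{1}{n}\sum_{i=1}^n x_i^q \;\leq\; S^q \;=\; (x_1 + \cdots + x_n)^q,
$$
which is the left inequality of \eqref{e-z4}. For the upper bound, I would first bound the sum by $n$ times its largest term: $S \leq n \cdot \max_{1 \le i \le n} x_i$. Raising to the $q$-th power and using monotonicity gives $S^q \leq n^q (\max_i x_i)^q$, and since all the $x_i^q$ are nonnegative, $(\max_i x_i)^q \leq \sum_{i=1}^n x_i^q$. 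Combining,
$$
(x_1 + \cdots + x_n)^q \;\leq\; n^q \sum_{i=1}^n x_i^q,
$$
which is the right inequality.

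There is no genuine obstacle here; the only point to note is the use of monotonicity of $t \mapsto t^q$ on $[0,\infty)$, which is valid for all $q > 0$. One could refine the upper bound to $n^{q-1}\sum x_i^q$ when $q \geq 1$ via Jensen's inequality applied to the convex function $t^q$, and to $\sum x_i^q$ when $0 < q < 1$ via subadditivity of the concave function $t^q$ on $[0,\infty)$; however, the crude factor $n^q$ is entirely sufficient for the ways in which Lemma \ref{lem-2.2} is invoked in the sequel (where the polynomial factor $n^q$ is later absorbed into subexponential error terms of the form $\log c_n / n \to 0$ coming from Proposition \ref{pro-3.1} and the AWSC).
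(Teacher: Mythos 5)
Your proof is correct and complete: both halves follow from monotonicity of $t\mapsto t^q$ on $[0,\infty)$ together with the trivial bounds $x_i\le S$ and $S\le n\max_i x_i$. The paper itself gives no proof, dismissing the lemma as obvious, and your argument is exactly the standard one that the authors have in mind.
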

\bigskip

\begin{proof}[Proof of Theorem \ref{thm-3.1}] Set
 $$
 t_n=\max_{\widetilde{Q}\in {\bf D}_n}\#\{S_u:\; u\in W_n, \; K_u\cap \widetilde{Q}\neq \emptyset\},\qquad n\in \N.
 $$
 (See Sect.~\ref{Sub-1} for the  notation.)
Since the IFS $\{S_i\}_{i=1}^\ell$ is assumed to satisfy the AWSC (cf. Def.~\ref{e-3.1}), we have $$\lim_{n\to \infty}\frac{1}{n}\log t_n=0.$$
For each $n\in \N$, define an equivalence relation on $W_n$ by setting $u\sim v$ if and only if $S_u=S_v$. For $u\in W_n$, let $[u]$ denote the equivalence class containing $u$. In particular, we write
$$\overline{p}_{[u]}:=\sum_{v\in [u]}p_u,\quad  S_{[u]}:=S_u, \quad\mbox{and}\quad K_{[u]}:=K_u.$$
Iterating \eqref{e-1.2}, we obtain
\begin{equation}
\label{e-nu-new}
\nu=\sum_{[u]\in W_n/\sim}\overline{p}_{[u]}\; \nu\circ S_{[u]}^{-1}.
\end{equation}
Recall that by Proposition \ref{pro-3.1}, there is a sequence of positive numbers $(c_n)_{n=1}^\infty$ with $c_n>1$ and $\lim_{n\to \infty}(1/n)\log c_n=0$ such that
\eqref{e-3.1} holds.

From now on, we fix  $n\geq 0$  and $x\in \R$ such that $\mu(B_{2^{-n-1}}(x))>0$. Fix $q>0$ so that $\tau'(q)$ exists and fix $k\in \N$.
Let $\epsilon, \gamma, \delta, M$ be the positive numbers (depending on $q,k$)  given in Lemma \ref{lem-3.1} so that \eqref{e-3.1a}-\eqref{e-3.1b} hold.
Recall that we have the restrictions that
\begin{equation}
\label{e-z6}
\delta=\min\left\{\frac{1}{4k}, \frac{1}{4kq}\right\},\quad  \epsilon<\min\{1,q\}\quad  \mbox{and}\quad \gamma<\frac{1}{4k}.
\end{equation}
 Denote
$$
A=\frac{\nu(B_{2^{-n}}(x))}{\nu(B_{2^{-n-1}}(x))}.
$$
For convenience, denote $r=2^{-n}$. Let $n'$ be the unique integer satisfying
\begin{equation}\label{e-z0}
r/16<2^{-n'}\sqrt{d} \leq r/8.
\end{equation}
Clearly
\begin{equation}
\label{e-z7}
0<n'-n<4+\frac{\log d}{2\log 2}.
\end{equation}
A simple geometric argument shows that $B_r(x)$ intersects at most
$$
\Big(\frac{2r}{2^{-n'}}+1\Big)^d\leq (32\sqrt{d})^d
$$
elements in ${\bf D}_{n'}$. Hence we have
\begin{equation}
\label{e-z1}
\#\{[u]\in W_{n'}/\sim:\; K_{[u]}\cap B_r(x)\neq \emptyset\}\leq (32\sqrt{d})^d t_{n'}=:\tilde{t}_{n'}.
\end{equation}
Pick $[u_0]\in W_{n'}/\sim$ such that $K_{[u_0]}\cap B_{r/2}(x)\neq\emptyset$ and
$$
\overline{p}_{[u_0]}=\max\{\overline{p}_{[u]}:\; [u]\in W_{n'}/\sim, K_u\cap B_{r/2}(x)\neq \emptyset\}.
$$
By \eqref{e-nu-new},
$$
\sum_{[u]\in W_{n'}/\sim,\; K_{[u]}\cap B_{r/2}(x)\neq \emptyset}\overline{p}_{[u]}\geq \nu(B_{r/2}(x))=\frac{\nu(B_r(x))}{A}.
$$
Therefore we have
\begin{equation}
\label{e-z2}
\overline{p}_{[u_0]}\geq \frac{\nu(B_r(x))}
{\tilde{t}_{n'} A}.
\end{equation}
Set
$$\Gamma=\{[u]\in W_{n'}/\sim, K_{[u]}\cap B_{7r/8}(x)\neq \emptyset\}.$$
 By \eqref{e-z1}, $\# \Gamma\leq \tilde{t}_{n'}$.
Now define a measure $\eta$ on $\R^d$ by
$$
\eta=\sum_{[u]\in \Gamma}\overline{p}_{[u]}\;\nu\circ S_{[u]}^{-1}.
$$
Then by \eqref{e-nu-new}, the restrictions of $\eta$ and $\nu$ on $B_{7r/8}(x)$ coincide, i.e.,
$\eta|_{B_{7r/8}(x)}=\nu|_{B_{7r/8}(x)}$. By \eqref{e-z0},  $K_{[u]}\subset B_r(x)$ for all $[u]\in \Gamma$, hence by \eqref{e-nu-new},
\begin{equation}
\label{e-z3}
\sum_{[u]\in \Gamma}\overline{p}_{[u]}\leq \nu(B_r(x)).
\end{equation}

 Let $m'\in \N$. Denote
$$
\tau_{n'+m'}(F,q)=\sum_{Q\in {\bf D}_{n'+m'}:\; Q\subset F} \nu(Q)^q,\qquad F\subset \R^d.
$$
Since $K_{[u_0]}\cap B_{r/2}(x)\neq \emptyset$, by \eqref{e-z0}, for all those $Q\in {\bf D}_{n'+m'}$ with  $Q\cap K_{[u_0]}\neq \emptyset$, we have $Q\subset B_{3r/4}(x)$.  Hence we have
\begin{equation}
\label{e-tt}
\begin{split}
\tau_{n'+m'}(B_{3r/4}(x),q)&\geq \sum_{Q\in {\bf D}_{n'+m'}:\; Q\cap K_{[u_0]}\neq \emptyset}\nu(Q)^q\\
\mbox{}&\geq  \sum_{Q\in {\bf D}_{n'+m'},\; Q\cap K_{u_0}\neq \emptyset}(\overline{p}_{[u_0]})^q\;(\nu\circ S_{[u_0]}^{-1}(Q))^q\\
\mbox{}&=  (\overline{p}_{[u_0]})^q\; \sum_{Q\in {\bf D}_{n'+m'}}(\nu\circ S_{u_0}^{-1}(Q))^q\\
\mbox{}&\geq  (c_{n'})^{-(q+1)}(\overline{p}_{[u_0]})^q \tau_{m'}(q)\qquad \quad\mbox{(by \eqref{e-3.1})}\\
\mbox{}&\geq  (c_{n'}\tilde{t}_{n'}A)^{-q-1} \nu(B_r(x))^q   \tau_{m'}(q) \qquad\quad\mbox{(by \eqref{e-z2})}.
\end{split}
\end{equation}

On the other hand, we have
\begin{equation}
\label{e-tt1}
\begin{split}
\tau_{n'+m'}(B_{7r/8}(x),q) &=\sum_{Q\in {\bf D}_{n'+m'}:\; Q\subset B_{7r/8}(x)}\nu(Q)^q\\
\mbox{}&= \sum_{Q\in {\bf D}_{n'+m'}:\; Q\subset B_{7r/8}(x)}\eta(Q)^q
\leq \sum_{Q\in {\bf D}_{n'+m'}}\eta(Q)^q\\
\mbox{}&= \sum_{Q\in {\bf D}_{n'+m'}}\sum_{[u]\in \Gamma} \left(\overline{p}_{[u]}\;\nu\circ S_{[u]}^{-1}(Q)\right)^q\\
\mbox{}&\leq  \sum_{Q\in {\bf D}_{n'+m'}}(\tilde{t}_{n'})^q\sum_{[u]\in \Gamma} (\overline{p}_{[u]})^q\;\nu\circ S_{u}^{-1}(Q)^q\qquad \mbox{(by \eqref{e-z4})}\\
\mbox{}&\leq  (\tilde{t}_{n'})^q\sum_{[u]\in \Gamma} (\overline{p}_{[u]})^q\sum_{Q\in {\bf D}_{n'+m'}}\nu\circ S_{u}^{-1}(Q)^q\\
\mbox{}&\leq  (c_{n'}\tilde{t}_{n'})^{q+1}\nu(B_r(x))^q   \tau_{m'}(q)\qquad\mbox{(by \eqref{e-3.1},  \eqref{e-z3})}.
\end{split}
\end{equation}
Combining \eqref{e-tt} with \eqref{e-tt1} yields
\begin{equation}
\label{e-z10}
\tau_{n'+m'}(B_{7r/8}(x),q)\leq \tau_{n'+m'}(B_{3r/4}(x),q) \cdot (c_{n'} \tilde{t}_{n'}A)^{2q+2}, \quad \forall m'\in \N.
\end{equation}
We remark that in \eqref{e-tt}-\eqref{e-z10}, $q$ can be replaced by any positive number.

 From now on, assume that
 \begin{equation}
 \label{e-r1}
 \begin{split}
 m'\geq h_n=h_n(q,k):=&M+\frac{2q+3}{\gamma}\left(\log (4c_{n'}\tilde{t}_{n'}) +\log A \right.\\
 & \left. +\log (8^{1/q}\cdot 5^{d(q+1)/q}) \right),
 \end{split}
 \end{equation}
 where $\gamma$ and $M$ are the positive numbers given in Lemma \ref{lem-3.1} (they depend on $q$ and $k$).

It is easy to see that
$$
2^{2^{m'-1}}\geq (c_{n'}\tilde{t}_{n'}A)^{2q+2}.
$$
By \eqref{e-z10}, there exists $1\leq j\leq 2^{m'-1}$ such that
$$
\tau_{n'+m'}(B_{3r/4+j\cdot 2^{-(n'+m'-1)}\sqrt{d}}(x),q)\leq 2
\tau_{n'+m'}(B_{3r/4+(j-1)\cdot 2^{-(n'+m'-1)}\sqrt{d}}(x),q).
$$
(Otherwise,
\begin{eqnarray*}
\tau_{n'+m'}(B_{7r/8}(x),q)&\geq& \tau_{n'+m'}(B_{3r/4+2^{m'-1}\cdot 2^{-(n'+m'-1)}\sqrt{d}}(x),q)\\
&\geq& 2\tau_{n'+m'}(B_{3r/4+(2^{m'-1}-1)\cdot 2^{-(n'+m'-1)}\sqrt{d}}(x),q)\\
&\geq& \cdots\\
&\geq& 2^{2^{m'-1}} \tau_{n'+m'}(B_{3r/4}(x),q),
\end{eqnarray*}
which contradicts  \eqref{e-z10}.)
Fix such $j$ and take $$r'=3r/4+(j-1)\cdot 2^{-(n'+m'-1)}\sqrt{d}.$$
Then
\begin{equation}
\label{e-z5}
\tau_{n'+m'}(B_{r'+ 2^{-(n'+m'-1)}\sqrt{d}}(x),q)\leq 2
\tau_{n'+m'}(B_{r'}(x),q).
\end{equation}

Now define
\begin{equation*}
\begin{split}
{\mathcal F}&=\{ Q\in {\bf D}_{n'+m'}:\; Q\subset B_{7r/8}(x),\;
\nu(Q)<\nu(B_r(x))\cdot 2^{-m'(\alpha+\delta)} \},\\
{\mathcal F'}&=\{Q\in {\bf D}_{n'+m'}:\; Q\subset B_{7r/8}(x),\;
\nu(Q)>\nu(B_r(x))\cdot 2^{-m'(\alpha-\delta)} \}.
\end{split}
\end{equation*}
Then we have the estimation
\begin{eqnarray*}
\sum_{Q\in {\mathcal F}}\nu(Q)^q&\leq &\nu(B_r(x))^\epsilon2^{-m'(\alpha+\delta)\epsilon} \sum_{Q\in {\mathcal F}}\nu(Q)^{q-\epsilon}\\
&\leq &\nu(B_r(x))^\epsilon2^{-m'(\alpha+\delta)\epsilon}\tau_{n'+m'}(B_{7r/8}(x), q-\epsilon)\\
 &\leq&
(c_{n'}\tilde{t}_{n'})^{q-\epsilon+1}2^{-m'(\alpha+\delta)\epsilon} \nu(B_r(x))^q\tau_{m'}(q-\epsilon)\\
 &\mbox{}& \qquad\mbox{( by applying \eqref{e-tt1}, in which $q$ is replaced by $q-\epsilon$)}\\
  &\leq&
(c_{n'}\tilde{t}_{n'})^{q+2}\nu(B_r(x))^q\tau_{m'}(q)2^{-m'\gamma}\qquad\qquad\mbox{(by \eqref{e-3.1b})}\\
&\leq& (c_{n'}\tilde{t}_{n'}A)^{2q+3} 2^{-m'\gamma}\tau_{n'+m'}(B_{3r/4}(x), q)
 \qquad\mbox{(by \eqref{e-tt})}\\
&\leq& \frac{1}{4} \tau_{n'+m'}(B_{3r/4}(x), q)
\qquad \qquad\mbox{(by \eqref{e-r1})}.
 \end{eqnarray*}
Similarly, we have
\begin{eqnarray*}
\sum_{Q\in {\mathcal F}'}\nu(Q)^q&\leq &\nu(B_r(x))^{-\epsilon}\;2^{m'(\alpha-\delta)\epsilon} \sum_{Q\in {\mathcal F}'}\nu(Q)^{q+\epsilon}\\
&\leq &\nu(B_r(x))^{-\epsilon}\;2^{m'(\alpha-\delta)\epsilon}\;\tau_{n'+m'}(B_{7r/8}(x), q+\epsilon)\\
 &\leq&
 (c_{n'}\tilde{t}_{n'})^{q+\epsilon+1}\;2^{m'(\alpha-\delta)\epsilon} \nu(B_r(x))^q\tau_{m'}(q+\epsilon)\\
 &\mbox{}& \qquad\mbox{( by applying \eqref{e-tt1}, in which $q$ is replaced by $q+\epsilon$)}\\
  &\leq&
(c_{n'}\tilde{t}_{n'})^{q+2}\nu(B_r(x))^q\tau_{m'}(q)2^{-m'\gamma}\qquad\qquad\mbox{(by \eqref{e-3.1a})}\\
&\leq& (c_{n'}A\tilde{t}_{n'})^{2q+3} 2^{-m'\gamma}\tau_{n'+m'}(B_{3r/4}(x), q)
 \qquad\mbox{(by \eqref{e-tt})}\\
 &\leq& \frac{1}{4} \tau_{n'+m'}(B_{3r/4}(x), q).
\end{eqnarray*}

For any $Q\in {\bf D}_{n'+m'}$, we denote by
$$
Q^*=\prod_{s=1}^d\left[ \frac{i_s-2}{2^{n'+m'}},\frac{i_s+3}{2^{n'+m'}}\right)\quad \mbox{ if }\quad Q=\prod_{s=1}^d\left[ \frac{i_s}{2^{n'+m'}},\frac{i_s+1}{2^{n'+m'}}\right).
$$
Clearly, $Q^*$ contains exactly $5^d$ many elements in ${\bf D}_{n'+m'}$.  Set
\begin{equation*}
\begin{split}
T:&=8^{1/q}\cdot 5^{d(q+1)/q}\qquad \mbox{and}\\
{\mathcal F}''&=\{Q\in {\bf D}_{n'+m'}:\; Q\subset B_{r'}(x),\;
\nu(Q^*)>T\nu (Q)\}.
\end{split}
\end{equation*}
Then
\begin{eqnarray*}
\sum_{Q\in {\mathcal F}''}
\nu(Q)^q&\leq&
\sum_{Q\in {\bf D}_{n'+m'}: \; Q\subset B_{r'}(x)}
T^{-q}\nu(Q^*)^q\\
&\leq & T^{-q} 5^{d(q+1)} \tau_{n'+m'}(B_{r'+2^{-(n'+m'-1)}\sqrt{d}}(x),q)\qquad\mbox{(by \eqref{e-z4})}\\
&\leq & 2 \cdot T^{-q} 5^{d(q+1)} \tau_{n'+m'}(B_{r'}(x),q)\qquad\mbox{(by \eqref{e-z5})}\\
&=&\frac{1}{4}\tau_{n'+m'}(B_{r'}(x),q).
\end{eqnarray*}

Let
\begin{equation*}
\begin{split}
{\mathcal P}&=\left\{Q\in {\bf D}_{n'+m'}: \; Q\subset B_{r'}(x), \nu(Q^*)\leq  T\nu(Q) \mbox{ and }\right.\\
&\qquad \qquad  \, \; 2^{-m'(\alpha+\delta)}\leq {\nu(Q)}/{\nu(B_r(x))}\leq 2^{-m'(\alpha-\delta)}\}.
\end{split}
\end{equation*}
We have
\begin{eqnarray*}
\sum_{Q\in {\mathcal P}}
\nu(Q)^q &\geq& \sum_{Q\in {\bf D}_{n'+m'}: \; Q\subset B_{r'}(x)}\nu(Q)^q-\sum_{Q\in {\mathcal F}\cup {{\mathcal F}}'\cup {{\mathcal F}}''} \nu(Q)^q
\\
&=&
\tau_{n'+m'}(B_{r'}(x), q)-\sum_{Q\in {\mathcal F}\cup {{\mathcal F}}'\cup {{\mathcal F}}''} \nu(Q)^q\\
&\geq& \frac{1}{4}\tau_{n'+m'}(B_{r'}(x),q)\geq \frac{1}{4}\tau_{n'+m'}(B_{3r/4}(x),q)\\
&\geq& \frac14(c_{n'}\tilde{t}_{n'}A)^{-q-1}\nu(B_r(x))^q   \tau_{m'}(q) \qquad\qquad \mbox{(by \eqref{e-tt})}\\
&\geq & 2^{-m'/(4k)}\nu(B_r(x))^q  2^{-m'(\tau(q)+\gamma)}>0\qquad \mbox{(by \eqref{e-r1}, \eqref{e-3.1O})}.
\end{eqnarray*}
Clearly $\#{\mathcal P}\geq 1$.  Since $\nu(Q)\leq \nu(B_r(x))2^{-m'(\alpha-\delta)}$ for each $Q\in {\mathcal P}$, we have
\begin{eqnarray*}
\#{\mathcal P}&\geq& \nu(B_r(x))^{-q}2^{qm'(\alpha-\delta)}\sum_{Q\in {\mathcal P}}
\nu(Q)^q \\
&\geq& 2^{m'(\alpha q-\tau(q)-\delta q-\gamma-\frac{1}{4k})}
\geq 2^{m'(\alpha q-\tau(q)-\frac{3}{4k})} \qquad \mbox{(by \eqref{e-z6})}\\
&\geq & 5^d 2^{m(\alpha q-\tau(q)-\frac{1}{k})},
\end{eqnarray*}
with $m:=m'+n'-n$. Clearly $n+m=n'+m'$.

A simple geometric argument shows that there exists a family ${\mathcal P}'\subset {\mathcal P}$ with
$$\#{\mathcal P}'\geq 5^{-d}(\#{\mathcal P})\geq 2^{m(\alpha q-\tau(q)-\frac{1}{k})},$$  such that the set in $\{Q^*:\; Q\in {\mathcal P}'\}$ are disjoint.
Pick a large number  $C$ (independent of $n+m$) such that each $Q\in {\bf D}_{n+m}$ can be covered by $C$ many balls of radius of $2^{-n-m-1}$.
Then for any $Q\in {\mathcal P}'$, we can pick a ball $B_{2^{-n-m-1}}(y_Q))$ with $y_Q\in Q$ such that   $\nu(B_{2^{-n-m-1}}(y_Q))\geq \nu(Q)/C$.
Note that $Q\subset B_{2^{-n-m}}(y_Q)$ and $B_{2^{-n-m+1}}(y_Q)\subset Q^*$. We have
\begin{equation}
\label{e-g1}
\frac{ \nu(B_{2^{-n-m+1}}(y_Q))}{\nu(B_{2^{-n-m-1}}(y_Q))}\leq CT
\end{equation}
and
\begin{equation}\label{e-g2}
2^{-m(\alpha+\frac{1}{k})}\leq \frac{ \nu(Q)} {\nu(B_{2^{-n}}(x))} \leq \frac{ \nu(B_{2^{-n-m}}(y_Q))} {\nu(B_{2^{-n}}(x))}\leq \frac{ T\nu(Q)} {\nu(B_{2^{-n}}(x))}
\leq 2^{-m(\alpha-\frac{1}{k})}.
\end{equation}

Hence we have shown that when $n\geq 0$ and $x\in \R^d$ are given so that $\nu(B_{2^{-n-1}}(x))>0$,
for any $q\in \Omega_+$ and $k>0$,  if $m\geq h_n +n'-n$, where $h_n$ is defined as in \eqref{e-r1},
then there exist a disjoint family of balls $\{B_{2^{-n-m'}}(y_Q):\; Q\in {\mathcal P}'\}$ contained in $B_{2^{-n}}(x)$,
with $\#{\mathcal P}'\geq 2^{m(\alpha q-\tau(q)-\frac{1}{k})}$ and \eqref{e-g1}-\eqref{e-g2} hold.  This implies that $\nu$ has an asymptotically good multifractal structure on $\R_+$.
\end{proof}
\section{The proof of Theorem \ref{thm-1.2}}
\label{S-4}
We first give a simple lemma.

\begin{lem}
\label{lem-end}
Assume that $\mu$ is a self-similar measure associated with an IFS $\{S_i(x)=\rho x+a_i\}_{i=1}^\ell$ on $\R$ and a probability vector $(p_1,\ldots, p_\ell)$.
Let $K$ be the attractor of $\{S_i\}_{i=1}^\ell$. Then
we have the following properties.
\begin{itemize}
\item[(i)] If $\dim_HK=1$, then $\tau'_\mu(0+)\geq 1$.
\item[(ii)] If $p_i>\rho$ for some $1\leq i\leq \ell$, then $\tau_{\mu}^\prime(+\infty)\leq \log p_i/\log \rho<1$.
\end{itemize}
\end{lem}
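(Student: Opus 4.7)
The plan is to prove the two parts separately: (i) reduces to a concavity argument once we have identified the values $\tau_\mu(0)$ and $\tau_\mu(1)$, and (ii) reduces to producing a lower bound on the $\mu$-mass of small balls around a fixed point by iterating the self-similarity identity.

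For (i), observe that $\Theta_\mu(0;r)$ is the $r$-packing number of $K=\mathrm{supp}(\mu)$, so
\[\tau_\mu(0)=-\overline{\dim}_B K\ \le\ -\dim_H K\ =\ -1.\]
Moreover $\tau_\mu(1)=0$: the upper bound $\Theta_\mu(1;r)\le 1$ comes from disjointness and $\mu(\R)=1$, while a Vitali-type covering argument on $K$ gives a positive lower bound for $\Theta_\mu(1;r)$ independent of $r$. Because $\tau_\mu$ is concave on $\R$, the chord inequality between $0$ and $1$ yields
\[\tau'_\mu(0+)\ \ge\ \frac{\tau_\mu(1)-\tau_\mu(0)}{1-0}\ \ge\ 1.\]

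For (ii), let $a=a_i/(1-\rho)$ be the fixed point of $S_i$; since $a\in K$, the attractor of the IFS, $a$ lies in $\mathrm{supp}(\mu)$. Iterating the self-similarity identity $\mu=\sum_{j=1}^\ell p_j\,\mu\circ S_j^{-1}$ and discarding all terms except the $i$-th gives $\mu\ge p_i^n\,\mu\circ S_i^{-n}$ for every $n\in\N$. Since $S_i^{-n}(B_r(a))=B_{r\rho^{-n}}(a)$, choosing $C>0$ with $K\subseteq B_C(a)$ and specializing to $r=C\rho^n$ yields
\[\mu(B_{C\rho^n}(a))\ \ge\ p_i^n\,\mu(B_C(a))\ =\ p_i^n.\]
For each $q>0$, testing the supremum defining $\Theta_\mu(q;C\rho^n)$ on the single ball $B_{C\rho^n}(a)$ gives $\Theta_\mu(q;C\rho^n)\ge p_i^{nq}$, and taking logarithms, dividing by $\log(C\rho^n)$, and passing to the liminf as $n\to\infty$ produces
\[\tau_\mu(q)\ \le\ \frac{q\log p_i}{\log\rho},\qquad q>0.\]
Dividing by $q$ and sending $q\to\infty$ gives $\tau'_\mu(+\infty)\le \log p_i/\log\rho$. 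Finally, since $\log\rho<0$, the assumption $p_i>\rho$ is equivalent to $\log p_i/\log\rho<1$.

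Neither step presents a substantial obstacle. The mildest care needed is in (ii) when passing from the discrete sequence $r=C\rho^n$ to the liminf over all $r\to 0$ in the definition of $\tau_\mu(q)$, but this is routine since $\Theta_\mu(q;\cdot)$ varies only by a bounded factor over each interval $[C\rho^{n+1},C\rho^n]$.
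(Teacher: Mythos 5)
Your proof is correct, and it diverges from the paper's in part~(i) while matching it in part~(ii).

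For part~(ii) your argument coincides with the paper's: iterate the self-similarity, keep the word $i^n$, and test the supremum in $\Theta_\mu(q;\cdot)$ on a single small ball around the fixed point (the paper uses the cylinder $S_i^n(K)$, which is contained in such a ball, so the two are identical in substance). One small remark: the closing caveat about $\Theta_\mu(q;\cdot)$ varying by a bounded factor over $[C\rho^{n+1},C\rho^n]$ is unnecessary and not obviously true; you do not need it. Since $\tau_\mu(q)$ is a $\liminf$ over $r\to 0$, exhibiting an upper bound along any subsequence $r_n\to 0$ already bounds $\tau_\mu(q)$ from above, with no monotonicity or regularity of $\Theta_\mu$ required.

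For part~(i) you take a genuinely different route. The paper bounds $\tau_\mu(q)$ directly for $0<q<1$ by writing $\tau_\mu$ with dyadic cubes and applying the concavity of $x\mapsto x^q$ to get $\sum_{Q\in{\bf D}_n}\mu(Q)^q\le v_n^{1-q}$ with $v_n\le c2^n$, which yields $\tau_\mu(q)\ge q-1$; together with $\tau_\mu(0)=-1$ this gives $\tau_\mu'(0+)\ge 1$. You instead invoke two general identities — $\tau_\mu(0)=-\overline{\dim}_B(\operatorname{supp}\mu)$ and $\tau_\mu(1)=0$ for any compactly supported probability measure — and then read off $\tau_\mu'(0+)\ge \tau_\mu(1)-\tau_\mu(0)\ge 1$ from the secant inequality for the concave function $\tau_\mu$. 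Both arguments are valid; yours is shorter and more conceptual once the two standard identities are granted, whereas the paper's is more self-contained (it reproves in effect the part of these facts it needs). Note that neither proof actually uses self-similarity in part~(i) — both only use that $K\subset\R$ is compact with $\dim_H K=1$ — so the lemma's hypothesis there is broader than stated.
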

\begin{proof}
To prove (i), assume that $\dim_HK=1$. Then it can be checked directly that $\tau_\mu(0)=-1$. Now let $0<q<1$. By the concavity of $x^q$ on $(0,+\infty)$, we have
$$
\sum_{Q\in {\bf D}_n}\mu(Q)^q=\sum_{Q\in {\bf D}_n:\; Q\cap K\neq \emptyset}\mu(Q)^q\leq v_n^{1-q},
$$
where $v_n=\#\{Q\in {\bf D}_n:\; Q\cap K\neq \emptyset\}$. Since  $v_n\leq c2^n$ for some constant $c>0$, we derive that  $\tau_\mu(q)\geq q-1$ and hence
$$\tau'_\mu(0+)=\lim_{q\to 0+}\frac{\tau_\mu(q)-\tau_\mu(0)} {q}\geq 1.$$
To show (ii), assume that $p_1>\rho$ without loss of generality. Then $\mu(S_1^n(K))\geq p_1^n$ for each $n\geq 1$, where $S_1^n$ denotes the $n$-th composition of $S_1$.
It follows that for $q>0$, $\Theta_\mu(q; \rho^n\mbox{diam}(K))\geq \mu(S_1^n(K))^q\geq p_1^{nq}$.  Hence $\tau_\mu(q)\leq q \log p_1/\log \rho$, which implies that
$\tau_{\mu}^\prime(+\infty)\leq \log p_1/\log \rho<1$.
\end{proof}

\begin{lem}
\label{lem-end1}
For $n\geq 4$, let  $\beta_n$ be the largest real root of the polynomial $Q_n(x)=x^n-x^{n-1}-\cdots-x+1$. Then  $\beta_n^{n+1}>2^n$ for $n\geq 5$.
\end{lem}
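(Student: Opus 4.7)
The plan is to exploit the telescoping identity
\[
(x-1)Q_n(x) = x^{n+1} - 2x^n + 2x - 1,
\]
which follows at once by summing the geometric progression $x^{n-1} + x^{n-2} + \cdots + x$ inside $Q_n(x)$. Evaluating at $x = 2$ gives $Q_n(2) = 3 > 0$, so $\beta_n < 2$; evaluating at $x = 1$ gives $Q_n(1) = 3 - n < 0$ for $n \geq 4$, so $\beta_n > 1$. Because $Q_n$ has positive leading coefficient and $\beta_n$ is its \emph{largest} real root, $Q_n(x) > 0$ for every $x > \beta_n$. This is the key monotonicity fact used to turn an inequality $\beta_n > y$ into a sign check $Q_n(y) < 0$.

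Next I would introduce $y_n := 2^{n/(n+1)}$ and observe that $\beta_n^{n+1} > 2^n$ is equivalent to $\beta_n > y_n$, which by the previous paragraph is implied by $Q_n(y_n) < 0$. Since $y_n^{n+1} = 2^n$ and therefore $y_n^n = 2^n/y_n$, the factorization immediately gives
\[
(y_n - 1)Q_n(y_n) = \frac{2^n(y_n - 2)}{y_n} + (2y_n - 1).
\]
Because $y_n > 1$, the lemma reduces to the single inequality
\[
2^n(2 - y_n) > y_n(2y_n - 1) \qquad (n \geq 5).
\]

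The right-hand side is at most $2 \cdot 3 = 6$ since $y_n < 2$. For the left-hand side I would apply the elementary convexity estimate $1 - e^{-t} \geq t - t^2/2$ at $t = (\ln 2)/(n+1)$, which gives
\[
2 - y_n \;=\; 2\bigl(1 - 2^{-1/(n+1)}\bigr) \;\geq\; \frac{2 \ln 2}{n+1}\left(1 - \frac{\ln 2}{2(n+1)}\right).
\]
Multiplying by $2^n$, and noting that $2^n/(n+1)$ is increasing for $n \geq 1$ with value $32/6$ at $n = 5$, a short numeric check shows $2^n(2 - y_n) > 6$ uniformly for all $n \geq 5$, which completes the argument.

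The main obstacle is simply that the inequality is genuinely tight at the boundary: at $n = 4$ one computes $16(2 - y_4) \approx 4.14 < 4.32 \approx y_4(2y_4 - 1)$, so the method \emph{must} fail there, and the elementary bound on $2 - y_n$ is only barely sufficient at $n = 5$. Consequently, the base case $n = 5$ has to be treated with some care; I would verify it by a direct numerical evaluation of the displayed estimate, and then use monotonicity of $2^n/(n+1)$ to sweep up all $n \geq 6$ at once. No deeper algebraic structure (e.g.\ a recursion between $\beta_n$ and $\beta_{n+1}$) seems needed, and attempts to induct on $n$ are hindered by the fact that the natural recursion $Q_{n+1}(x) = xQ_n(x) + 1 - 2x$ does not interact cleanly with the threshold $x = 2^{n/(n+1)}$.
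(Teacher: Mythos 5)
Your argument is correct, and while it begins from the same factorization $(x-1)Q_n(x)=x^{n+1}-2x^n+2x-1$ as the paper, the subsequent estimation is carried out differently. The paper first proves the crude uniform bound $\beta_n>1.8$ for $n\geq 5$, deduces $\epsilon_n:=2-\beta_n=(2\beta_n-1)/\beta_n^n<3\cdot 1.8^{-n}$, notes $(n+1)\epsilon_n<1$, and then applies the Mean Value Theorem to $t\mapsto t^{n+1}$ on $[\beta_n,2]$ to get $\beta_n^{n+1}\geq 2^{n+1}-(n+1)\epsilon_n 2^n>2^n$. You instead identify the exact threshold $y_n=2^{n/(n+1)}$, reduce the claim to the single sign condition $Q_n(y_n)<0$, i.e.\ $2^n(2-y_n)>y_n(2y_n-1)$, and dispose of it with the convexity estimate $1-e^{-t}\geq t-t^2/2$ applied at $t=(\ln 2)/(n+1)$. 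Both routes are equally elementary; the paper's geometric decay of $1.8^{-n}$ gives plenty of slack for all $n\geq 5$, whereas your argument works directly with the exact quantity being compared and so is conceptually tighter but, as you note, requires a careful check at the base case $n=5$ (where the margin is roughly $6.97>6$) together with monotonicity of $2^n/(n+1)$ and of $1-\tfrac{\ln 2}{2(n+1)}$ to cover $n\geq 6$. One small remark: the inference ``$Q_n(2)=3>0$, hence $\beta_n<2$'' does not follow from the stated monotonicity fact alone (which only controls $Q_n$ to the right of $\beta_n$); the cleanest justification is $(x-1)Q_n(x)=x^n(x-2)+(2x-1)>0$ for all $x\geq 2$, so no root lies in $[2,\infty)$. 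This is immaterial to your proof, which only uses the trivial $y_n<2$.
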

\begin{proof}
Multiplying $x-1$ by $Q_n(x)$ yields
$$
(x-1)Q_n(x)=x^{n+1}-2x^n+2x-1.
$$
Hence $(2-\beta_n)\beta_n^n=2\beta_n-1$. Now assume that $n\geq 5$. It is easy to check that $\beta_n>1.8$. Hence $2-\beta_n=\frac{2\beta_n-1}{\beta_n^n}<3\times 1.8^{-n}$. Let $\epsilon_n=2-\beta_n$. Then $(n+1)\epsilon_n\leq (n+1)\times 3\times 1.8^{-n}<1$. By the Mean Value Theorem,
$$(2-\epsilon_n)^{n+1}=2^{n+1}-(n+1)\epsilon_n \xi_n^n\geq 2^{n+1}-(n+1)\epsilon_n 2^n>2^n.$$ That is, $\beta_n^{n+1}>2^n$.
\end{proof}

\begin{proof}[Proof of Theorem \ref{thm-1.2}]
Assume $\lambda=\beta_n^{-1}$, $n\geq 4$. Iterate \eqref{1.1} $k$-times to get
\begin{equation}
\label{e-en1}
\nu_\lambda=\sum_{I\in {\mathcal A}^k}\frac{1}{2^k}\nu_\lambda\circ S_I^{-1},
\end{equation}
where ${\mathcal A}=\{1,2\}$. Define an equivalence relation $\sim$ on ${\mathcal A}^k$ $I\sim J$ if and only if $S_I=S_J$.  For $I\in {\mathcal A}^k$, let $I$ denote the equivalence class that contains $I$. Then
\eqref{e-en1} can be rewritten as
\begin{equation}
\label{e-en2}
\nu_\lambda=\sum_{[I]\in {\mathcal A}^k/\sim}\frac{\#[I]}{2^k}\nu_\lambda\circ S_{[I]}^{-1},
\end{equation}
where $\#[I]$ denotes the cardinality of  the equivalence class $[I]$. To prove Theorem \ref{thm-1.2}, according to Lemma \ref{lem-end}, it suffices to show that there exists $k\in \N$ and $I\in {\mathcal A}^k$ such that $\frac{\#[I]}{2^k}>\lambda^k$. We prove this fact by considering two different cases separately: $n\geq 5$ and $n=4$. In the first case, we take $k=n+1$ and $I=1\underbrace{2\cdots 2}_{n-1}1$.  It is easy to see that  $I\sim 2\underbrace{1\cdots 1}_{n-1}2$, and hence $\#[I]\geq 2$. Then the inequality $\frac{\#[I]}{2^k}>\lambda^k$ follows from Lemma \ref{lem-end1}. Next we consider the case  $n=4$. Take $k=15$ and let $$I=122211121112221.$$
A direct computation shows that $\#[I]=10$ (see Table \ref{table-1}) and  $\frac{\#[I]}{2^k}>\lambda^k$.
\begin{table}
\centering
\caption{Elements in $[I]$}
\vspace{0.05 in}
\begin{footnotesize}
\begin{raggedright}
\begin{tabular}{p{1.3 in} p{1. in} }
\hline \rule{0pt}{3ex}
\!\!$122122122211112$ & $122122211112221$\\
$122122211121112$ & $122211112221221$\\
$122211121112221$ & $122211121121112$\\
$211112221221221$ & $211121112221221$\\
$211121121112221$ & $211121121121112$\\
\hline
\end{tabular}
\label{table-1}
\end{raggedright}
\end{footnotesize}
\end{table}
\end{proof}

\section{Absolutely continuous self-similar measures with non-trivial range of local dimensions}
\label{S-5}
In this section, we show the existence of an absolutely continuous  self-similar measure on $\R$ with non-trivial range of local dimensions. Indeed, we have the following result.

\begin{pro} For $\lambda, u\in (0,1)$, let $\Phi_{\lambda, u}:=\{S_i\}_{i=1}^3$ be the IFS on $\R$ given by
$$S_1(x)=\lambda x,\quad S_2(x)=\lambda x+u,\quad \quad S_3(x)=\lambda x+1.$$
Let $\mu_{\lambda,u}$ be the self-similar measure associated with  $\Phi_{\lambda, u}$ and the probability vector $\{1/4, 5/12, 1/3\}$, i.e.,
$\mu=\mu_{\lambda,u}$ satisfies $$\mu=\frac{1}{4}\mu\circ S_1^{-1} +\frac{5}{12}\mu\circ S_2^{-1}+\frac{1}{3}\mu\circ S_3^{-1}.$$
Then for ${\mathcal L}^2$-a.e. $(\lambda, u)\in (0.3405, 0.3439)\times (1/3, 1/2)$, $\mu_{\lambda,u}$ is absolutely continuous, and  the range of local dimensions of $\mu_{\lambda, u}$ contains a non-degenerate interval, on which the multifractal formalism for $\mu_{\lambda, u}$ is valid.
\label{pro-5}
\end{pro}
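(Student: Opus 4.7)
The plan is to realize $\mu_{\lambda,u}$ as the image of a two-dimensional self-similar measure satisfying the open set condition under a linear projection, and to combine this representation with a transversality argument and the framework of Section~\ref{S-2}. Let $\tilde\Phi=\{\tilde S_i\}_{i=1}^3$ be the planar IFS given by $\tilde S_1(x,y)=(\lambda x,\lambda y)$, $\tilde S_2(x,y)=(\lambda x+1,\lambda y)$, $\tilde S_3(x,y)=(\lambda x,\lambda y+1)$, and let $\tilde\mu=\tilde\mu_\lambda$ be the self-similar measure on $\R^2$ associated with $\tilde\Phi$ and the weights $(1/4,5/12,1/3)$. For every $\lambda<1/2$ the IFS $\tilde\Phi$ satisfies the OSC with respect to the open square $(0,(1-\lambda)^{-1})^2$, so $\tilde\mu$ has the explicit $L^q$-spectrum $\tau_{\tilde\mu}(q)=\log(\sum_i p_i^q)/\log\lambda$ and fulfils the classical multifractal formalism (Cawley--Mauldin, Patzschke). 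Setting $\pi_u:(x,y)\mapsto ux+y$, a direct comparison of the defining random series shows that $\mu_{\lambda,u}=\pi_u(\tilde\mu)$, i.e.\ $\mu_{\lambda,u}$ is a linear projection of $\tilde\mu$.

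Absolute continuity of $\mu_{\lambda,u}$ for ${\mathcal L}^2$-a.e.\ $(\lambda,u)$ in the rectangle will come from a two-parameter transversality argument. A direct computation gives the entropy $h:=-\sum_i p_i\log p_i=(1/2)\log 2+(5/12)\log(12/5)+(1/3)\log 3\approx 1.0777$, so $e^{-h}\approx 0.3405$ and the ``expected'' dimension $h/|\log\lambda|$ strictly exceeds $1$ throughout the rectangle. A standard Peres--Solomyak-type transversality argument applied to the pairwise differences $\pi_u\circ\tilde S_I-\pi_u\circ\tilde S_J$ between $n$-th level compositions then yields absolute continuity of $\mu_{\lambda,u}=\pi_u(\tilde\mu)$ for a.e.\ parameter. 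Given absolute continuity, the attractor $K$ of $\Phi_{\lambda,u}$ has $\dim_H K=1$, so Lemma~\ref{lem-end}(i) gives $\tau_{\mu_{\lambda,u}}'(0+)\ge 1$; and since $p_2=5/12>\lambda$ throughout the rectangle, Lemma~\ref{lem-end}(ii) gives $\tau_{\mu_{\lambda,u}}'(+\infty)\le\log(5/12)/\log\lambda<1$, establishing non-degeneracy of the multifractal interval.

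The hard part will be the validity of the multifractal formalism for $\mu_{\lambda,u}$ on the whole interval $[\tau'_\mu(+\infty),\tau'_\mu(0+)]$. My plan is to prove that $\mu_{\lambda,u}$ possesses an asymptotically good multifractal structure over $\R_+$ in the sense of Definition~\ref{de-2.1}, after which Theorem~\ref{thm-2.1}(b) closes the argument. The central technical step is a projection analogue of Proposition~\ref{pro-3.1}: for ${\mathcal L}^2$-a.e.\ $(\lambda,u)$, every $q>0$, every $n,m\in\N$, and every $n$-th level composition $S_I$ of $\Phi_{\lambda,u}$, the bound
\[
 c_n^{-(q+1)}\tau_m(q)\le\sum_{Q\in{\bf D}_{n+m}}\mu_{\lambda,u}(S_I^{-1}Q)^q\le c_n^{q+1}\tau_m(q)
\]
should hold with $(1/n)\log c_n\to 0$. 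The analogous bound for $\tilde\mu$ is immediate from OSC; to transfer it to the projection one needs a subexponential upper bound on the number of $n$-th level cylinders of $\tilde\mu$ whose $\pi_u$-images fall in a given interval of length $2^{-n}$. I plan to obtain this bound by the same transversality $L^2$-estimate on $\pi_u\circ\tilde S_I-\pi_u\circ\tilde S_J$ that drives the absolute-continuity proof, now applied at the level of pairs of cylinders. Once the projected counting bound is in place, the argument of Theorem~\ref{thm-3.1} transfers almost verbatim, the AWSC hypothesis being replaced by this bound, and Theorem~\ref{thm-2.1}(b) then delivers the multifractal formalism on the advertised interval.
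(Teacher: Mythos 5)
Your projection representation $\mu_{\lambda,u}=\pi_u(\tilde\mu)$ is correct and a nice way to see the structure, and your entropy computation matching the lower endpoint $0.3405$ is also correct. But there is a genuine gap in the step that matters, namely the validity of the multifractal formalism.

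You aim to verify the formalism on the full interval $[\tau'_\mu(+\infty),\tau'_\mu(0+)]$ by establishing an asymptotically good multifractal structure, for which you would need an AWSC-type counting bound on the projected IFS $\Phi_{\lambda,u}$ for a.e.\ $(\lambda,u)$. That bound is not established: the $L^2$-transversality estimate controls the expected (or integrated) number of near-coincidences among the maps $\pi_u\circ\tilde S_I$, but it does not yield a deterministic subexponential bound $t_n$ on the number of distinct level-$n$ maps hitting a fixed dyadic cube, which is what Definition~\ref{de-3.1} demands. Whether a.e.\ projection of an OSC system satisfies AWSC is not known and is certainly not a ``transfers almost verbatim'' corollary of the proof of Theorem~\ref{thm-3.1}. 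You are also trying to prove strictly more than the proposition claims; the statement asks only for some non-degenerate subinterval on which the formalism holds.

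The paper's route avoids this entirely and is much shorter. Falconer's Theorem~6.2 in \cite{Fal99} gives, for each $\lambda<1/2$ and ${\mathcal L}$-a.e.\ $u$, the explicit $L^q$-spectrum
$\tau(q,\lambda,u)=\min\bigl\{\log((1/4)^q+(5/12)^q+(1/3)^q)/\log\lambda,\;q-1\bigr\}$ for $1<q<2$; for $\lambda<0.3438$ the first branch wins on $1.5<q<2$, and there it is smooth in $q$. Theorem~1.1 of \cite{Fen07} then says that for \emph{any} self-conformal measure, without any separation hypothesis, $E_\nu(\alpha)\neq\emptyset$ and $\dim_H E_\nu(\alpha)=\tau_\nu^*(\alpha)$ hold at every $\alpha=\tau_\nu'(q)$ with $q>1$ where the derivative exists. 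Applying this on $q\in(1.5,2)$ produces the required non-degenerate interval with the formalism valid on it, with no counting estimates needed. For absolute continuity the paper invokes Peres--Solomyak \cite{PeSo98} directly, checking the transversality condition $\lambda(\sqrt3+1)<1$ and the entropy bound $0.3405>(1/4)^{1/4}(5/12)^{5/12}(1/3)^{1/3}$; your two-parameter transversality would also work but is more than is needed. Finally, your appeal to Lemma~\ref{lem-end} does establish that the local-dimension range is non-degenerate, but that is a weaker statement than validity of the formalism on an interval, which is what still needs proof.
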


\begin{proof} For $q>0$, let $\tau(q, \lambda, u)$ denote the $L^q$ spectrum of $\mu_{\lambda, u}$. Applying Theorem 6.2 by Falconer in \cite{Fal99},  for each $0<\lambda<1/2$,  we have for ${\mathcal L}$-a.e. $u\in (0,1)$,
$$
\tau(q, \lambda, u)=\min\left\{\frac{\log \left((1/4)^q+(5/12)^q+(1/3)^q\right)}{\log \lambda},\;q-1 \right\},\quad 1<q<2.
$$
Write $f(q)= (1/4)^q+(5/12)^q+(1/3)^q$. Clearly $f(1)=1$. It is easily checked that $\log f(q)$ is strictly convex over $q>0$ and hence $\frac{\log f(q)}{q-1}$ is strictly increasing over $q>1$. Note that $f(1.5)^{1/(1.5-1)}=f(1.5)^2\approx 0.34387$. Hence for $0<\lambda<0.3438$ and $q>1.5$,
$$
g(q, \lambda):=\frac{\log \left((1/4)^q+(5/12)^q+(1/3)^q\right)}{\log \lambda}<q-1.
$$
Therefore for every  $0<\lambda<0.3438$, we have for ${\mathcal L}$-a.e. $u\in (0,1)$,  $\tau(q, \lambda, u)=g(q, \lambda)$ for $1.5<q<2$; clearly, $g$ is differentiable in $q$, thus by Theorem 1.1 in \cite{Fen07}, the range of local dimensions of $\mu_{\lambda, u}$ contains the non-degenerate interval $\{\frac{dg(q, \lambda)}{dq}:\; 1.5<q<2\}$, on which the multifractal formalism for $\mu_{\lambda, u}$ is valid.

To complete the proof of the proposition, it suffices  to show that for every $u\in (1/3, 1/2)$, $\mu_{\lambda, u}$ is absolutely continuous for $\mathcal L$-a.e.
$\lambda\in (0.3405, 0.3438)$.  This is done by simply applying a general  result by Peres and Solomyak (see Theorem 1.3 in \cite{PeSo98}).  The transversality condition needed there holds since  $\lambda (\sqrt{3}+1)<1$ (see the remark after Theorem 1.3 in \cite{PeSo98}) and $0.3405>(1/4)^{1/4}(5/12)^{5/12}(1/3)^{1/3}\approx 0.34042$.
\end{proof}
\medskip

We end the paper by posing the following  unsolved questions:
\begin{itemize}
\item[(i)] Does  Theorem \ref{thm-1.1} hold for all $\lambda\in (1/2, 1)$? Moreover, does Theorem \ref{thm-1.3} hold for all self-conformal measures?
\item[(ii)]
 Is it always true that  $\tau^\prime_{\ber}(+\infty)<1$ when $\lambda^{-1}$ is a Salem number?\end{itemize}

 We remark that the inequality in (ii) always holds  in the case that  $\lambda^{-1}$ is a Pisot number in $(1,2)$;  because in the Pisot case, $\tau^\prime_{\ber}(1)=\dim_H\ber<1$ (cf. \cite{Fen03}), hence $\tau^\prime_{\ber}(+\infty)\leq \tau^\prime_{\ber}(1)<1$.

\noindent {\bf Acknowledgements}.  The author was partially
supported by the RGC grant and the Focused Investments Scheme  in CUHK.  He is grateful to Boris Solomyak for his kind question in \cite{Sol07} which motivates this research. He thanks  Tsz-Chiu Kwok for providing some numerical computations and Table \ref{table-1}.

\end{document}